\def\R{{\mathbb {R}}}
\def\N{{\mathbb {N}}}
\def\L{{\mathcal{L}}}
\def\A{{\mathcal{A}}}
\def\D{{\mathcal{D}}}
\def\J{{\mathcal{J}}}
\def\O{{\Omega}}
\def\lam{\lambda}
\def\vp{\varphi}
\def\ve{\varepsilon}
\def\cd{\rightharpoonup}
\def\pp{|u(x)-u(y)|^{p-2}}
\def\div{\operatorname {\text{div}}}
\def\curl{\operatorname {\text{curl}}}
\newtheorem{teo}{Theorem}[section]
\newtheorem{lema}[teo]{Lemma}
\newtheorem{prop}[teo]{Proposition}
\newtheorem{corol}[teo]{Corollary}
\theoremstyle{remark}
\newtheorem{remark}[teo]{Remark}
\theoremstyle{definition}
\newtheorem{defi}[teo]{Definition}
\numberwithin{equation}{section}
\begin{document}

\title[$H-$convergence for nonlocal elliptic-type problems]{$H-$convergence result for nonlocal elliptic-type problems via Tartar's method}
\author[J. Fern\'andez Bonder, A. Ritorto and A.M. Salort]{Juli\'an Fern\'andez Bonder, Antonella Ritorto and Ariel Martin Salort}

\address{Departamento de Matem\'atica, FCEN -- Universidad de Buenos Aires and IMAS -- CONICET, Buenos Aires, Argentina}
\email[J. Fern\'andez Bonder]{jfbonder@dm.uba.ar}
\urladdr[J. Fern\'andez Bonder]{http://mate.dm.uba.ar/~jfbonder}

\email[A. Ritorto]{aritorto@dm.uba.ar}

\email[A.M. Salort]{asalort@dm.uba.ar}
\urladdr[A.M. Salort]{http://mate.dm.uba.ar/~asalort}

\subjclass[2010]{35R11, 35B27}

\keywords{Fractional partial differential equations, homogenization, $p-$laplacian type equations}

\begin{abstract}
In this work we obtain a compactness result for the $H-$convergence of a family of nonlocal and nonlinear monotone elliptic-type problems by means of Tartar's method of oscillating test functions.
\end{abstract}

\maketitle

\section{Introduction}

Homogenization theory dates back to the works of S. Spagnolo \cite{Sp68}, E. De Giorgi and S. Spagnolo \cite{SD73}, I. Babu{\v{s}}ka \cite{Ba76}, A. Bensoussan, J.L. Lions and G. Papanicolaou \cite{BLP78} and E. S\'anchez-Palencia \cite{SP80} among others. In the context of linear elliptic partial differential equations, the model to be studied is the limit as $n\to\infty$ of the following problems
\begin{align} \label{ec.i.1}
\begin{cases}
-\div(A_n \nabla u_n) = f &\text{ in } \O\\
u_n = 0 & \text{ on }\partial \O,
\end{cases}
\end{align}
where $\O\subset \R^N$ is a bounded domain, $f\in H^{-1}(\O)$ and $\{A_n\}_{n\in\N}\subset [L^\infty(\O)]^{N\times N}$ is a sequence of symmetric and uniformly coercive matrices.

As a model example, the authors considered the case where the matrices $A_n$ are given in terms of a single matrix $A$ in the form
$$
A_n(x) = A(nx),
$$
where $A$ is periodic, of period 1, in each variable.

In the periodic setting, the limit problem when $n\to\infty$ can easily be fully characterized. See \cite{BLP78}. 

In order to deal with the general case, Spagnolo and De Giorgi introduced the concept of $G-$convergence, that was later generalized by Murat and Tartar in the late 70s and is now called $H-$convergence. See \cite{CioDo}.

When F. Murat in 1974  was studying the behavior of \eqref{ec.i.1} as $n\to\infty$, one of the main drawbacks he found was the fact that two weakly convergent sequences do not converge, in general, to the product of their limits. Murat overcame this difficulty by developing a compensated compactness argument known as the {\em div-curl Lemma}, denomination suggested by his advisor, J.L. Lions,  due to the fact that  it results from a compensation effect. The Lemma was published  in 1978 \cite{Mu78} and an alternative proof was provided by L. Tartar also in 1978 \cite{T78} by using H\"ormander's compactness argument for the injection of $H^1_0(\O)$ into $L^2(\O)$. The lemma claims that if we consider two sequences $\{\psi_n\}_{n\in\N}$ and $\{\phi_n\}_{n\in\N}$ in $[L^2(\O)]^N$ such that
$$
\psi_n \cd \psi, \quad \text{ and } \quad \phi_n \cd \phi\quad \text{weakly  in } [L^2(\O)]^N,
$$
with the additional assumption that
$$
\div \psi_n \to \div \psi \text{ in } H^{-1}(\O), \quad \text{ and } \quad  \curl \phi_n \to \curl \phi \mbox{ in } [H^{-1}(\O)]^{N\times N},
$$
then we can guarantee that $\psi_n\cdot \phi_n \to \psi\cdot \phi$ in the sense of distributions. Recall that the curl of a vector field $\phi\in [L^2(\O)]^N$ is defined as
$$
\curl \phi = \left(\frac{\partial \phi^i}{\partial x^j} - \frac{\partial \phi^j}{\partial x^i}\right)_{1\le i,j\le N}.
$$

The div-curl Lemma plays a crucial role in homogenization theory. In fact, based on this lemma, Tartar introduced in \cite{T78,T77} a method leading to the limiting behavior of \eqref{ec.i.1} as $n \to \infty$, obtaining the existence of a coercive matrix $A_0\in [L^\infty(\O)]^{N\times N}$ such that the sequence of solutions $\{u_n\}_{n\in\N}$ of \eqref{ec.i.1} converges weakly in $H^1_0(\O)$, up to some subsequence, to a function $u_0$ which is the solution of the following {\em homogenized} limit problem
\begin{align} \label{ec.i.2}
\begin{cases}
-\div(A_0 u_0) = f & \text{ in } \O\\
u_0 = 0 & \text{ on }\partial \O.
\end{cases}
\end{align}	
Moreover, $A_n\nabla u_n \cdot \nabla u_n \to A_0\nabla u_0 \cdot \nabla u_0$ in the sense of distributions, see for instance \cite{Al,CioDo}. That is, the sequence $A_n$ $H-$converges to $A_0$.

In the quasilinear case, this type of results were obtained by several authors in the late 80s and the beginning of the 90s. We refer the interested reader to \cite{ChiadoPiat-DalMaso-Defranceschi, Pankov} and to G. Dal Maso's book \cite{DalMaso} where the authors use $\Gamma-$convergence methods in order to deal with these problems. See \cite{Braides-ChiadoPiat-Defranceschi} for the periodic case. Let us mentioned that $\Gamma-$convergence studies the behavior of minima in variational problems, so when specialized in quadratic functionals, this gives the behavior for symmetric elliptic problems.

We remark that in the linear case, $H-$convergence and $\Gamma-$convergence where recently shown to coincide even in the non symmetric case by Ansini, Dal Maso and Zeppieri \cite{Ansini-DalMaso-Zeppieri}.

More general classes of problems were addressed recently. In the case of periodic homogenization of certain Hamilton-Jacobi and fully nonlinear elliptic partial differential equations was studied first by Evans \cite{Ev92}. In the context of fully nonlinear uniformly elliptic equations in stationary ergodic media, the problem was studied by Caffarelli, Sounganidis and Wang \cite{CSW05}. In these papers the existence of homogenized equations is proved, but, due to the generality of these problems, no further information about the  structure of the limit problems was obtained.

Our intention in this work is to address the $H-$convergence problem to the nonlocal version of \eqref{ec.i.1} and to give a characterization of the homogenized limit problem. Before introducing our results, we  review the background regarding nonlocal problems and its homogenization.

In recent years, there has been a plenty of works on anomalous diffusion  where the standard Laplace operator,  which gives an explanation in terms of Brownian motion, has been replaced by nonlocal operators. The main aim was to extend the diffusion theory by taking into account the long range interactions. Such operators do not act by point-wise differentiation  but by a global integration with respect to a singular kernel. One prototype to have in mind is the so-called fractional laplacian defined by
\begin{equation}\label{frac.lap}
(-\Delta^s) u(x) := \text{p.v.} \int_{\R^N} \frac{u(x)-u(y)}{|x-y|^{N+2s}}\, dy, \qquad s\in(0,1),
\end{equation}
up to a normalization constant. The interest in studying this operator has a long history in probability since it is the infinitesimal generator of stable L\'evy processes. See \cite{AFT95,App04, Bertoin, CST91} and references therein.

For a general introduction to the mathematical analysis of these models, we refer the reader to the recent book \cite{Demengel}, the articles \cite{Bucur-Valdinoci, NPV12} and references therein.

The regularity theory for fully nonlinear integro-differential equations, which include the fractional laplacian as a trivial example, was recently extensively studied. See, for instance, \cite{CS09,CS11,RoS,Si06}.

Based in these regularity results for fully nonlinear integro-differential equations, R. Schwab in  \cite{Schwab1, Schwab2} extended the results of Evans and Caffarelli, Souganidis and Wang to this setting, but again no information on the limit problem is obtained. We recall that the results of Schwab make extensive use either of the periodicity or the ergodicity of the problem and the author does not obtain any general convergence result.

Aimed at obtaining more precise information on the homogenized equation and without making any assumptions on the behavior of the sequence of the operators, we focus our analysis to a general family of nonlinear anisotropic operators of the form 
\begin{equation}\label{L_A}
\L_a u(x) := \text{p.v.} \int_{\R^N} a(x,y) \frac{|u(x)-u(y)|^{p-2}(u(x)-u(y))}{|x-y|^{N+sp}}\, dy,\quad s\in (0,1),
\end{equation}
for a given positive and bounded kernel $a(x,y)$, where $p\in (1,\infty)$ is fixed.

Then the problem we address is the behavior as $n\to\infty$ of
\begin{align}  \label{ec.ii.1}
\begin{cases}
\L_{a_n} u_n =f & \text{ in } \O\\
u_n = 0 & \text{ in }\R^N \setminus \O,
\end{cases}
\end{align}
where $\O\subset\R^N$ is a bounded domain, $f\in L^{p'}(\O)$, $\frac{1}{p}+\frac{1}{p'}=1$, and $\{a_n\}_{n\in\N}$ denotes a sequence of uniformly bounded and positive kernels.

So in order to apply Tartar's method, we first prove a nonlocal version of the div-curl Lemma that allows us to deal with \eqref{ec.ii.1} as $n\to\infty$, leading to the limit  problem
\begin{align*} 
\begin{cases}
\L_{a_0} u_0 = f &\quad\text{ in } \O\\
u_0 = 0 &\quad \text{ in }\R^n \setminus \O.
\end{cases}
\end{align*}
The homogenized kernel $a_0(x,y)$ inherits the positivity and boundedness of the sequence $a_n(x,y)$. 

Finally, we show that this convergence result implies the $\Gamma-$convergence of the associated energy functionals.

We want to stress that the results presented in this work are new even in the linear case that corresponds to $p=2$.

\subsection*{Organization of the paper}
The present article is organized as follows. In Section \ref{sec.intro} we introduce the preliminaries on fractional Sobolev spaces needed in this work. Section \ref{sec.divcurl} is devoted to prove the nonlocal div-curl Lemma. In Section \ref{sec.homog}, we deal with the $H-$convergence compactness result for nonlocal operators via Tartar's method and finally in Section \ref{sec.Gamma} we prove the $\Gamma-$convergence of the associated energy functionals. At the end of the article, we have included an appendix with an abstract compactness result for (nonlinear) monotone operators that is needed in the course of the proof of our main result.

\section{Preliminaries and notation} \label{sec.intro}

In this section we review the basics of fractional order Sobolev spaces. Anyone that is already familiar with nonlocal elliptic-type problems can safely skip this section and return to it only if necessary. 

Here, in order to make the paper self-contained, we only introduce the definitions and results needed in this work. We refer the interested reader to the excellent literature on the subject for a throughout description of these spaces. See for instance the books \cite{Adams, Demengel} and the review article \cite{NPV12}. 

\subsection{The spaces $W^{s,p}(\R^N)$ and $W^{s,p}_0(\O)$} Given $0<s<1\le p <\infty$, the fractional Sobolev space $W^{s,p}(\R^N)$ is defined as
$$
W^{s,p}(\R^N) := \left\{u\in L^p(\R^N)\colon \frac{u(x)-u(y)}{|x-y|^{\frac{N}{p}+s}}\in L^p(\R^N\times\R^N)\right\}.
$$
The norm in this space is then naturally defined as
$$
\|u\|_{s,p} = (\|u\|_p^p + [u]_{s,p}^p)^\frac1p,
$$
where $\|\cdot\|_p$ is, as usual, the $L^p-$norm in $\R^N$ and
$$
[u]_{s,p} := \left(\iint_{\R^N\times\R^N} \frac{|u(x)-u(y)|^p}{|x-y|^{N+sp}}\, dxdy\right)^\frac1p
$$
is the so-called {\em Gagliardo seminorm}.

The space $W^{s,p}(\R^N)$ with the norm $\|\cdot\|_{s,p}$, is a reflexive and separable Banach space. See any of the above mentioned references on fractional order Sobolev spaces for a proof of these facts.

It is also easy to see that smooth functions with compact support are contained in $W^{s,p}(\R^N)$. Also, smooth and rapidly decreasing functions belong to $W^{s,p}(\R^N)$.

Since we need to consider boundary conditions, it is customary to define, given an open set $\O\subset\R^N$, the space of functions that vanish outside $\O$. That is
\begin{equation}\label{Wsp0}
W^{s,p}_0(\O) := \overline{C_c^\infty(\O)} \subset W^{s,p}(\R^N),
\end{equation}
where the closure is taken with respect to the $\|\cdot\|_{s,p}-$norm.

We remark that in some places the following space is considered
$$
\widetilde{W}^{s,p}(\O) := \{u\in W^{s,p}(\R^N)\colon u = 0 \text{ a.e. in } \R^N\setminus\O\}.
$$
Clearly $W^{s,p}_0(\O)\subset \widetilde{W}^{s,p}(\O)$. If the set $\O$ has Lipschitz boundary, both spaces are known to coincide and, moreover, if $sp<1$, $W^{s,p}_0(\Omega)=\widetilde{W}^{s,p}(\O) = \{u|_\O\colon u \in W^{s,p}(\R^N)\}$. See \cite{NPV12}. 

In this article, we always consider $W^{s,p}_0(\O)$ as defined in \eqref{Wsp0}.

The following Poincar\'e type inequalities will be most useful: if $|\O|<\infty$, 
\begin{equation}\label{eq.poincare}
\|u\|_p\le C [u]_{s,p},
\end{equation}
for every $u\in W^{s,p}_0(\O)$, and the constant depends only on $N, s$ and $|\O|$.

The proof of \eqref{eq.poincare} is classical and the reader can find it in any of the above mentioned references.

Observe that from \eqref{eq.poincare}, the Gagliardo seminorm $[\,\cdot\,]_{s,p}$ becomes a norm equivalent to $\|\cdot\|_{s,p}$ in $W^{s,p}_0(\O)$ when $|\O|<\infty$. Hereafter we always use $[\,\cdot\,]_{s,p}$ as the norm in that space.

The extension of the Rellich-Kondrachov compactness theorem to the fractional order Sobolev spaces is also well-known. We state the theorem for future references.

\begin{teo}\label{teo.compacto}
Let $\O\subset \R^N$ be an open set with finite measure. Then the immersion $W^{s,p}_0(\O)\subset L^p(\O)$ is compact. That is, if $\{u_n\}_{n\in\N}\subset W^{s,p}_0(\O)$ is bounded, then there exists $u\in W^{s,p}_0(\O)$ and a subsequence $\{u_{n_k}\}_{k\in\N}\subset \{u_n\}_{n\in\N}$ such that
$$
\|u_{n_k} - u\|_p \to 0 \text{ as } k\to\infty.
$$
\end{teo}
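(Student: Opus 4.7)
The plan is to apply the Fr\'echet--Kolmogorov--Riesz compactness criterion in $L^p(\R^N)$. Given a bounded sequence $\{u_n\}\subset W^{s,p}_0(\O)$ (with respect to $[\,\cdot\,]_{s,p}$, equivalent to $\|\cdot\|_{s,p}$ on this space by \eqref{eq.poincare}), I extend each $u_n$ by zero outside $\O$ and view the family as a bounded subset of $L^p(\R^N)$. Since every $u_n$ is supported in $\O$ and $|\O|<\infty$, the uniform tail condition $\sup_n\|u_n\|_{L^p(\{|x|>R\})}\to 0$ as $R\to\infty$ follows from H\"older's inequality combined with the fractional Sobolev embedding into the higher exponent $L^{p^*_s}(\R^N)$, which delivers uniform integrability in $L^p$ on subsets of shrinking measure. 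The remaining, nontrivial hypothesis of the criterion is the equicontinuity under translations,
\[
\lim_{|h|\to 0}\sup_n\|u_n(\cdot+h)-u_n\|_{L^p(\R^N)}=0.
\]

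To verify this I rely on a quantitative mollification estimate. Fix a standard family of nonnegative mollifiers $\rho_\varepsilon\in C_c^\infty(B_\varepsilon)$ with $\int\rho_\varepsilon=1$, and set $u_n^\varepsilon:=u_n*\rho_\varepsilon$. I first establish
\[
\|u_n^\varepsilon-u_n\|_{L^p(\R^N)}\leq C\,\varepsilon^{s}\,[u_n]_{s,p},
\]
with $C=C(N,s,p,\rho)$. The derivation is short: Jensen's inequality applied to $u_n^\varepsilon(x)-u_n(x)=\int\rho_\varepsilon(y)(u_n(x-y)-u_n(x))\,dy$, followed by Fubini, gives
\[
\|u_n^\varepsilon-u_n\|_{L^p}^p\leq \int\int \rho_\varepsilon(y)|u_n(x-y)-u_n(x)|^p\,dx\,dy,
\]
and on $\supp\rho_\varepsilon\subset B_\varepsilon$ one uses $\rho_\varepsilon(y)\leq C\varepsilon^{-N}$ together with $|y|^{N+sp}\leq\varepsilon^{N+sp}$ to majorize the integrand by $C\,\varepsilon^{sp}|y|^{-(N+sp)}|u_n(x-y)-u_n(x)|^p$, which integrates precisely to $C\,\varepsilon^{sp}[u_n]_{s,p}^p$.

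With this estimate in hand the argument closes routinely. For each fixed $\varepsilon$, the smoothed family $\{u_n^\varepsilon\}_n$ is uniformly bounded and uniformly Lipschitz on $\R^N$, and supported in a common bounded neighborhood of $\O$, so Arzel\`a--Ascoli yields a uniformly, and hence $L^p$, convergent subsequence. A diagonal extraction along $\varepsilon_j=1/j$, together with the $\varepsilon^s$-bound used as a triangle-inequality buffer, produces a subsequence of $\{u_n\}$ that is Cauchy in $L^p(\R^N)$, and thus converges strongly to some $u$ in $L^p(\O)$; the reflexivity of $W^{s,p}_0(\O)$ and uniqueness of weak limits identify $u$ as an element of $W^{s,p}_0(\O)$. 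The main obstacle throughout is the quantitative mollification estimate: it is the single place where the fractional regularity encoded in $[u_n]_{s,p}$ actually gets converted into an $L^p$ gain of order $\varepsilon^s$, and it crucially exploits the specific singular weight $|x-y|^{-(N+sp)}$ appearing in the Gagliardo seminorm.
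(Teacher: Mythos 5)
The paper does not actually prove Theorem \ref{teo.compacto}; it cites \cite[Theorem 4.54]{Demengel} for bounded $\O$ and observes that the finite-measure case follows readily, so there is no internal proof for your argument to match. Your self-contained argument is essentially sound in its main computation: the quantitative mollification estimate $\|u_n^\varepsilon-u_n\|_{L^p(\R^N)}\le C\varepsilon^{s}[u_n]_{s,p}$ is derived correctly and is indeed the crux. However, there is a genuine gap at precisely the point where the theorem's generality beyond the cited reference matters. You assert that for each fixed $\varepsilon$ the mollified family $\{u_n^\varepsilon\}_n$ is ``supported in a common bounded neighborhood of $\O$'' and apply Arzel\`a--Ascoli on that set. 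When $\O$ has finite measure but is unbounded (which the statement allows), no such bounded neighborhood exists: Arzel\`a--Ascoli does not produce a subsequence converging uniformly on all of $\R^N$, and uniform convergence would in any case not pass to $L^p(\R^N)$ without compactly contained supports. So your closing paragraph only proves the theorem for bounded $\O$, i.e.\ exactly what the reference already covers.

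The repair is available with tools you already introduced, but you must actually deploy the tail estimate at this step rather than set it aside. Either (i) carry through the Fr\'echet--Kolmogorov--Riesz route you announced: deduce translation equicontinuity $\sup_n\|u_n(\cdot+h)-u_n\|_{L^p(\R^N)}\to 0$ from
\[
\|u_n(\cdot+h)-u_n\|_{L^p}\le 2\|u_n^\varepsilon-u_n\|_{L^p}+\|u_n^\varepsilon(\cdot+h)-u_n^\varepsilon\|_{L^p}\le 2C\varepsilon^{s}[u_n]_{s,p}+C|h|\,\varepsilon^{-1}\|u_n\|_{L^p},
\]
optimizing in $\varepsilon$, and then combine this with the tail estimate you proved to invoke the criterion directly; or (ii) keep Arzel\`a--Ascoli but apply it on a fixed large ball $B_R$ only, and control $\sup_n\|u_n^\varepsilon\|_{L^p(\R^N\setminus B_R)}$ via the tail condition, using that $u_n^\varepsilon$ on $\{|x|>R\}$ depends only on $u_n$ restricted to $\{|x|>R-\varepsilon\}$. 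A minor additional caveat: your tail estimate invokes the fractional Sobolev embedding into a higher Lebesgue exponent, which as stated presumes $sp<N$; for $sp\ge N$ the conclusion still holds via the Morrey-type embedding, but this should be said.
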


See \cite[Theorem 4.54]{Demengel} for a proof in the case where $\O$ is bounded. The case where $\O$ has finite measure is easily deduced from there.

\subsection{The dual spaces $W^{-s,p'}(\R^N)$ and $W^{-s,p'}(\O)$}
The dual space of $W^{s,p}(\R^N)$ will be denoted by $W^{-s,p'}(\R^N)$. Also, the dual space of $W^{s,p}_0(\O)$ will be denoted by $W^{-s,p'}(\O)$ as usual. Recall that in these spaces the norm is defined as
$$
\|f\|_{-s,p'} := \sup\{\langle f, u\rangle\colon u\in W^{s,p}(\R^N),\ \|u\|_{s,p}=1\}
$$
and
$$
\|f\|_{-s,p',\O} := \sup\{\langle f, u\rangle\colon u\in W^{s,p}_0(\O),\ [u]_{s,p}=1\}.
$$

Observe that $W^{-s,p'}(\R^N)\subset W^{-s,p'}(\O)$ with continuous inclusion.

Given $\{f_n\}_{n\in\N}\subset W^{-s,p'}(\R^N)$ and $f\in W^{-s,p'}(\R^N)$, we say that $f_n\to f$ in $W^{-s,p'}_\text{loc}(\R^N)$ if $\|f_n-f\|_{-s,p',\O}\to 0$ for every $\O\subset \R^N$ bounded and open.

Observe that since $C_c^\infty(\O)\subset W^{s,p}_0(\O)$, the dual space $W^{-s,p'}(\O)$ is contained in the space of distributions $\D'(\O)$.


\subsection{The operator $\L_a$}
Given $0<\lambda < \Lambda<\infty$, we denote by $\A_{\lambda, \Lambda}$ the class
\begin{equation}\label{A}
\A_{\lambda, \Lambda} := \{a\in L^\infty(\R^N\times\R^N) \colon a(x,y)=a(y,x),\  \lambda\le a(x,y) \le \Lambda \text{ a.e.}\}.
\end{equation}

Therefore, for $a\in \A_{\lambda, \Lambda}$ we define the operator $\L_a$ by
\begin{equation}\label{La}
\L_a u (x) = \text{p.v.} \int_{\R^N} a(x,y) \frac{|u(x)-u(y)|^{p-2}(u(x)-u(y))}{|x-y|^{N+sp}}\, dy.
\end{equation}

This operator $\L_a$ is a well defined operator between $W^{s,p}(\R^N)$ and its dual $W^{-s,p'}(\R^N)$ and also between $W^{s,p}_0(\O)$ and $W^{-s,p'}(\O)$. In fact,
\begin{equation}\label{dual}
\langle \L_a u, v\rangle = \frac12\iint_{\R^N\times\R^N} a(x,y)\frac{|u(x)-u(y)|^{p-2}(u(x)-u(y))(v(x)-v(y))}{|x-y|^{N+sp}}\, dxdy.
\end{equation}
The proof of \eqref{dual} is well known. See again \cite{Adams}, for instance. 

In the non symmetric case, one has that
\begin{align*}
\langle \L_a u, v\rangle =& \frac12\iint_{\R^N\times\R^N} a_{\text{sym}}(x,y)\frac{|u(x)-u(y)|^{p-2}(u(x)-u(y))(v(x)-v(y))}{|x-y|^{N+sp}}\, dxdy.
\\&  + \iint_{\R^N\times\R^N} a_{\text{anti}}(x,y) \frac{|u(x)-u(y)|^{p-2}(u(x)-u(y))}{|x-y|^{N+sp}} v(x)\, dxdy,
\end{align*}
where 
$$
a_{\text{sym}}(x,y) = \frac{a(x,y)+a(y,x)}{2}\quad \text{and}\quad a_{\text{anti}}(x,y) = \frac{a(x,y)-a(y,x)}{2},
$$
denote the symmetric and anti-symmetric parts of $a$ respectively.

In order for this operator to be well defined, one needs to impose some extra condition on the anti-symmetric part $a_{\text{anti}}$. For instance,
$$
\sup_{x\in\R^N} \int_{\R^N} \frac{|a_{\text{anti}}(x,y)|^p}{|x-y|^{N+sp}}\, dy <\infty.
$$
See \cite{Felsinger-Kassmann-Voigt, Schilling-Wang} that treat the case in the Hilbert space setting (in our case, that is $p=2$). The extension to general $p\in (1,\infty)$ is straightforward.

In oder to keep the arguments more transparent, we restrict ourselves to the symmetric case.

When $a(x,y)\equiv 1$, the operator $\L_a$ is called, up to some normalization constant, the fractional $p-$laplacian that is denoted by
$$
(-\Delta_p)^s u(x) = C(N,s,p)\text{ p.v.}  \int_{\R^N} \frac{|u(x)-u(y)|^{p-2}(u(x)-u(y))}{|x-y|^{N+sp}}\, dxdy.
$$

\subsection{The Dirichlet problem}
Let $\O\subset \R^N$ be an open set with finite measure and let $a\in \A_{\lambda, \Lambda}$. Given $f\in W^{-s,p'}(\O)$ we define the associated Dirichlet problem as
\begin{equation}\label{DirichletA}
\begin{cases}
\L_a u = f & \text{ in } \O\\
u = 0 & \text{ in } \R^N\setminus \O.
\end{cases}
\end{equation}
We say that $u\in W^{s,p}_0(\O)$ is a weak solution of \eqref{DirichletA} if
$$
\frac12 \iint_{\R^N\times\R^N}  a(x,y)\frac{\pp(u(x)-u(y))(v(x)-v(y))}{|x-y|^{N+sp}}\, dxdy = \langle f, v\rangle,
$$
for every $v\in W^{s,p}_0(\O)$.

Thanks to \eqref{dual}, this is equivalent to say that $\L_a u = f$ in the sense of distributions.

The next proposition is elementary. We include the proof for completeness.
\begin{prop} \label{equivalenciaI} Let $\O\subset \R^N$ be an open set of finite measure, $0<\lambda\le \Lambda<\infty$, $a\in \A_{\lambda, \Lambda}$ and $0<s<1\le p<\infty$ fixed. Then, for any $f\in W^{-s,p'}(\O)$, the following statements are equivalent:
\begin{enumerate}
\item  $u\in W^{s,p}_0(\O), \L_a u=f$ in $\O$, where $\L_a$ is defined by \eqref{DirichletA}.
\item $\J(u)=\min_{v\in W_0^{s,p}(\O)}\J(v)$, where $\J\colon W^{s,p}_0(\O)\to \R$ is defined by
\begin{equation} \label{J}
\J(v)=\frac{1}{2p}\iint_{\R^N\times\R^N}a(x,y) \frac{|v(x)-v(y)|^p}{|x-y|^{N+sp}} \, dxdy-\langle f, v\rangle.
\end{equation}
\end{enumerate}
\end{prop}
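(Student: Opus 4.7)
The plan is to show that $\J$ is a convex, Gateaux differentiable functional whose derivative at $u$, paired with a test function $v$, is precisely $\langle \L_a u - f, v\rangle$. Then the equivalence is the classical fact that for convex differentiable functionals, minimizers coincide with critical points (zeros of the derivative), which in turn coincide with weak solutions of the Euler-Lagrange equation.

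First I would compute, for fixed $u,v\in W^{s,p}_0(\O)$, the directional derivative $\frac{d}{dt}\J(u+tv)\big|_{t=0}$. The linear term $-\langle f, u+tv\rangle$ contributes $-\langle f, v\rangle$. For the nonlinear term, I would differentiate under the integral using the smooth dependence $\frac{d}{dt}|a+tb|^p = p|a+tb|^{p-2}(a+tb)b$, after justifying the interchange of limit and integral via a dominated convergence argument (the integrand is controlled by $a(x,y)$ times a uniform multiple of $(|u(x)-u(y)|^{p-1}+|v(x)-v(y)|^{p-1})|v(x)-v(y)|/|x-y|^{N+sp}$, which is integrable by Hölder's inequality since $u,v\in W^{s,p}_0(\O)$ and $a\in\A_{\lambda,\Lambda}$). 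This yields, in view of \eqref{dual},
\begin{equation*}
\frac{d}{dt}\J(u+tv)\Big|_{t=0} = \langle \L_a u, v\rangle - \langle f, v\rangle.
\end{equation*}

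Next, I would observe that $\J$ is strictly convex on $W^{s,p}_0(\O)$: the map $t\mapsto |t|^p$ is strictly convex for $p>1$, and since $a(x,y)\ge \lambda>0$, the double integral inherits strict convexity in $v$; the linear term $-\langle f, v\rangle$ is affine. By convexity, the inequality
\begin{equation*}
\J(v) \ge \J(u) + \frac{d}{dt}\J(u+t(v-u))\Big|_{t=0}
\end{equation*}
holds for every $v\in W^{s,p}_0(\O)$, with equality characterizing minimizers.

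For the implication (1)$\Rightarrow$(2), if $\L_a u=f$ holds weakly, then $\frac{d}{dt}\J(u+t(v-u))|_{t=0}=\langle \L_a u - f, v-u\rangle = 0$ for all $v\in W^{s,p}_0(\O)$, and convexity gives $\J(v)\ge \J(u)$. For (2)$\Rightarrow$(1), the minimization property implies $\frac{d}{dt}\J(u+tv)|_{t=0}=0$ for every admissible $v$, and the computed expression for the derivative yields $\langle \L_a u, v\rangle = \langle f,v\rangle$ for all $v\in W^{s,p}_0(\O)$, i.e.\ $\L_a u = f$ in the weak (distributional) sense.

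The only genuine technical point is the differentiation under the integral sign; everything else is routine convex analysis. I would handle it by estimating $|\,|a+tb|^p - |a|^p\,|/|t| \le C(|a|^{p-1}+|b|^{p-1})|b|$ for $|t|\le 1$ and applying dominated convergence, with integrability of the dominating function guaranteed by Hölder on $W^{s,p}(\R^N\times\R^N)$ with respect to the measure $|x-y|^{-N-sp}\,dxdy$ weighted by the bounded kernel $a$.
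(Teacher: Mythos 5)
Your proposal is correct and conceptually very close to the paper's proof: both establish $(2)\Rightarrow(1)$ by computing $j'(0)=0$ where $j(t)=\J(u+tv)$, which is exactly the Gateaux derivative calculation you describe. The only noteworthy difference is in $(1)\Rightarrow(2)$: you invoke the abstract first-order condition for convex functionals, $\J(v)\ge \J(u)+D\J(u)(v-u)$, which requires the differentiation-under-the-integral step even for this direction; the paper instead starts directly from the weak formulation tested against $u-v$, splits $a=a^{1/p}a^{1/p'}$, and applies Young's inequality $ab\le a^p/p + b^{p'}/p'$ pointwise in the double integral, arriving at $\J(u)\le\J(v)$ without differentiating $\J$ at all. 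Young's inequality here is precisely the concrete, pointwise form of the convexity of $t\mapsto |t|^p$, so the two arguments are morally identical; the paper's version is slightly more elementary because it confines the technical differentiation step (which you are right to flag and justify by dominated convergence) to the $(2)\Rightarrow(1)$ direction only.
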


\begin{proof}
The proof is standard.

First, we assume (1). Let $v\in W_0^{s,p}(\O)$, and use $u-v$ as a test function in the weak formulation of \eqref{DirichletA} to obtain
\begin{align*}
\frac12 &\iint_{\R^N\times \R^N} a(x,y) \frac{|u(x)-u(y)|^p}{|x-y|^{N+sp}}\, dxdy =\\
& \frac12 \iint_{\R^N\times\R^N}a(x,y) \frac{|u(x)-u(y)|^{p-2}(u(x)-u(y))(v(x)-v(y))}{|x-y|^{N+sp}}\, dxdy +\langle f, u-v\rangle.
\end{align*} 
We now write $a(x,y) = (a(x,y))^{\frac{1}{p}} (a(x,y))^{\frac{1}{p'}}$ and apply Young's inequality to the right-hand-side to obtain
\begin{align*}
\frac12 \iint_{\R^N\times \R^N}& a(x,y) \frac{|u(x)-u(y)|^p}{|x-y|^{N+sp}}\, dxdy \le\\
& \J(v) + \frac{1}{2p'} \iint_{\R^N\times\R^N}a(x,y) \frac{|u(x)-u(y)|^p}{|x-y|^{N+sp}}\, dxdy  +\langle f, u\rangle,
\end{align*} 
from where it follows that $\J(u)\le \J(v)$ for every $v\in W_0^{s,p}(\O)$, which proves (2).

Conversly, now assume (2). Let $t\in \R, v\in W_0^{s,p}(\O)$ and consider $j(t)=\J(u+tv)$. Then, $j$ attains its minimum at $t=0$. Therefore, $0=j'(0)$. That is,
$$
0=\frac12 \iint_{\R^N\times\R^N} a(x,y) \frac{|u(x)-u(y)|^{p-2}(u(x)-u(y))(v(x)-v(y))}{|x-y|^{N+sp}} \, dxdy -\langle f, v\rangle.
$$ 
So, $u$ is the weak solution of \eqref{DirichletA}.
\end{proof}

\begin{prop}\label{prop.min} Let $\O\subset \R^N$ be an open set with finite measure, $0<\lambda\le \Lambda<\infty$, $a\in \A_{\lambda, \Lambda}$ and $0<s<1\le p<\infty$ fixed. Then, for any $f\in W^{-s,p'}(\O)$, there exists a unique $u\in W_0^{s,p}(\O)$ minimizer of $\J$ over $W_0^{s,p}(\O)$, where $\J$ is defined by \eqref{J}.
\end{prop}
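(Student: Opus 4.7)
My plan is to apply the direct method of the calculus of variations to the functional $\J$ on the reflexive Banach space $W^{s,p}_0(\O)$ equipped with the norm $[\,\cdot\,]_{s,p}$.

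The first step is to establish coercivity. Since $a(x,y)\ge \lambda>0$ a.e., one has
\begin{equation*}
\J(v) \ge \frac{\lambda}{2p}\,[v]_{s,p}^p - \|f\|_{-s,p',\O}\,[v]_{s,p}.
\end{equation*}
For $p>1$ the right-hand side tends to $+\infty$ as $[v]_{s,p}\to\infty$, so $\J$ is coercive and any minimizing sequence $\{v_n\}_{n\in\N}\subset W^{s,p}_0(\O)$ is bounded. By reflexivity of $W^{s,p}_0(\O)$, one can extract a subsequence (still denoted $\{v_n\}$) such that $v_n\cd u$ weakly in $W^{s,p}_0(\O)$ for some $u\in W^{s,p}_0(\O)$.

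The second step is weak lower semicontinuity. The functional $\J$ splits as the sum of $v\mapsto -\langle f,v\rangle$, which is linear and continuous hence weakly continuous, and the Gagliardo-type energy
\begin{equation*}
\E(v) := \frac{1}{2p}\iint_{\R^N\times\R^N} a(x,y)\frac{|v(x)-v(y)|^p}{|x-y|^{N+sp}}\, dxdy.
\end{equation*}
Since $a\ge 0$ a.e. and $t\mapsto |t|^p$ is convex, $\E$ is convex. It is also strongly continuous on $W^{s,p}_0(\O)$ because the weighted integrand is bounded above by $\Lambda$ times the $p$-th power of the Gagliardo seminorm. A convex and strongly continuous functional on a Banach space is weakly lower semicontinuous, so $\J(u)\le \liminf_{n\to\infty}\J(v_n) = \inf_{W^{s,p}_0(\O)}\J$, proving that $u$ is a minimizer.

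Finally, uniqueness follows from strict convexity of $\J$ when $p>1$: the map $t\mapsto |t|^p$ is strictly convex, so if $u_1,u_2$ were two distinct minimizers, writing $w(x,y)=(u_1(x)-u_1(y))$ and $z(x,y)=(u_2(x)-u_2(y))$, one would have $w\not\equiv z$ on a set of positive measure in $\R^N\times\R^N$ (otherwise $u_1-u_2$ would be constant, hence $0$ by the zero Dirichlet condition), and then strict convexity of $\E$ combined with linearity of $v\mapsto \langle f,v\rangle$ would yield $\J(\tfrac{u_1+u_2}{2})<\tfrac12\J(u_1)+\tfrac12\J(u_2)$, a contradiction. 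The main (mild) obstacle is just making sure the coercivity/strict-convexity argument is stated cleanly; no new ingredients beyond Poincaré \eqref{eq.poincare}, reflexivity of $W^{s,p}_0(\O)$, and convexity of $|\cdot|^p$ are needed.
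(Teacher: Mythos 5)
Your proposal is correct and follows essentially the same route as the paper: the direct method of the calculus of variations, with coercivity from $a\ge\lambda$ and Poincar\'e, weak compactness from reflexivity, weak lower semicontinuity from convexity (plus strong continuity), and uniqueness from strict convexity of $t\mapsto|t|^p$. Your treatment of uniqueness is slightly more careful than the paper's (you explicitly note that two distinct elements of $W^{s,p}_0(\O)$ must have $(s,p)$-gradients differing on a set of positive measure, using the Poincar\'e inequality), and you are explicit that the argument uses $p>1$; both are welcome clarifications but not a different method.
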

\begin{proof}
Clearly, $m:=\inf_{W_0^{s,p}(\O)}\J<+\infty$. We will prove $\J$ is bounded from below.
$$
\J(v)\ge \lambda[v]_{s,p}^p-\|f\|_{-s,p'}[v]_{s,p}\ge(\lambda-\frac{\ve}{p})[v]_{s,p}^p-\frac{C(\ve)}{p'}\|f\|_{-s,p'}^{p'}. 
$$
Choose $0<\ve<p\lambda$, thus, $m\neq -\infty$.

Let $\{u_n\}_{n\in \N} \subset W_0^{s,p}(\O)$ be such that $J(u_n)\to m$, as $n\to\infty$. By the previous inequality, we deduce that $\{u_n\}_{n\in\N}\subset W_0^{s,p}(\O)$ is bounded. Then, since $W^{s,p}_0(\O)$ is a reflexive space, thanks to Alaoglu's theorem, up to a subsequence, there exists $u\in W_0^{s,p}(\O)$ such that $u_n\cd u$ weakly in $W_0^{s,p}(\O)$. Thus, by the weak lower semicontinuity of $\J$ (recall that $\J$ is convex), we obtain
$$
\J(u)\le \liminf_{n\to\infty} \J(u_n)=m=\inf_{W_0^{s,p}(\O)}\J.
$$
The uniqueness of the minimizer follows by the strict convexity of $\J$. Suppose $m=\J(u)=\J(v)$, $u\neq v$. Then, $m\le \J(\frac{u+v}{2})<\frac{\J(u)}{2}+\frac{\J(v)}{2}=m$, which is a contradiction.
\end{proof}

Propositions \ref{equivalenciaI} and \ref{prop.min} trivially imply the following.

\begin{corol}
 Let $\O\subset \R^N$ be an open set with finite measure, $0<\lambda\le \Lambda<\infty$, $a\in \A_{\lambda, \Lambda}$ and $0<s<1\le p<\infty$ fixed. Then, for any $f\in W^{-s,p'}(\O)$, there exists a unique weak solution $u\in W^{s,p}_0(\O)$ to \eqref{DirichletA}.
\end{corol}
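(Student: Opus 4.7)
The plan is to simply combine the two preceding propositions, since the corollary is a direct consequence of them. Concretely, I would first invoke Proposition \ref{prop.min} to obtain a unique minimizer $u \in W^{s,p}_0(\O)$ of the functional $\J$ defined in \eqref{J}. The hypotheses of Proposition \ref{prop.min} coincide exactly with those of the corollary, so no additional verification is needed.

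Next, I would apply Proposition \ref{equivalenciaI} to translate the minimization property into the PDE statement. Specifically, the equivalence (1) $\Leftrightarrow$ (2) in that proposition says that $u$ minimizes $\J$ over $W^{s,p}_0(\O)$ if and only if $u \in W^{s,p}_0(\O)$ solves $\L_a u = f$ in $\O$ in the weak sense of \eqref{DirichletA}. Thus, the minimizer produced by Proposition \ref{prop.min} is precisely the desired weak solution, establishing existence.

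For uniqueness, I would argue that if $u_1, u_2 \in W^{s,p}_0(\O)$ were two weak solutions of \eqref{DirichletA}, then by Proposition \ref{equivalenciaI} both would be minimizers of $\J$, contradicting the uniqueness of the minimizer furnished by Proposition \ref{prop.min}. Since there is no real obstacle here — the whole point of the previous two propositions is to reduce the Dirichlet problem to a strictly convex, coercive minimization — the proof is essentially a one-line citation, and I do not anticipate any technical difficulty beyond correctly matching the hypotheses.
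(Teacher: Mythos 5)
Your proposal is correct and follows exactly the route the paper takes: the paper states that Propositions \ref{equivalenciaI} and \ref{prop.min} ``trivially imply'' the corollary, and you have spelled out precisely that chain of implications (unique minimizer from Proposition \ref{prop.min}, equivalence with the weak formulation from Proposition \ref{equivalenciaI}). Nothing is missing.
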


\section{A nonlocal div-curl Lemma} \label{sec.divcurl}

In this section we prove a nonlocal version of the div-curl Lemma. This will be a fundamental tool in order to use Tartar's method in homogenization.  In the classical setting this lemma was proved by Tartar in \cite{T78,T77}. Here we do not need the lemma in its full generality. We prove only a special case that will suffices for our purposes. See \cite{Al} where a similar approach is made in the classical setting.

We need to introduce some notation and terminology. Given $u\in W^{s,p}(\R^N)$, we define its $(s,p)-$gradient as
\begin{equation}\label{s-grad}
D_{s,p} u(x,y) := \frac{u(x)-u(y)}{|x-y|^{\frac{N}{p}+s}}.
\end{equation}
Observe that, for any $u\in W^{s,p}(\R^N)$, $D_{s,p} u\in L^p(\R^N\times\R^N)$ and so $|D_{s,p} u|^{p-2} D_{s,p} u\in L^{p'}(\R^N\times\R^N)$.

Now, given $\phi\in L^{p'}(\R^N\times \R^N)$, we define its $(s,p)-$divergence as
\begin{equation}\label{s-div}
d_{s,p}\phi(x) := \text{p.v.} \int_{\R^N} \frac{\phi(x,y) - \phi(y,x)}{|x-y|^{\frac{N}{p}+s}}\, dy. 
\end{equation}

With this definitions we have $(-\Delta_p)^s u= \frac{C(N,s,p)}{2} d_{s,p}(|D_{s,p} u|^{p-2} D_{s,p} u)$. Moreover, if $\L_a$ is given by \eqref{La}, we have $\L_a u = \frac12 d_{s,p}(a |D_{s,p} u|^{p-2} D_{s,p} u)$.

We now need to check that this $(s,p)-$divergence operator is a well defined operator between $L^{p'}(\R^N\times\R^N)$ and $W^{-s,p'}(\R^N)$ and that the following {\em integration by parts formula} holds
\begin{equation}\label{partes}
\iint_{\R^N\times\R^N} \phi D_{s,p}u\, dxdy = \langle d_{s,p}\phi, u\rangle,
\end{equation}
for every $u\in W^{s,p}(\R^N)$ and $\phi\in L^{p'}(\R^N\times \R^N)$.

In order to keep the computations as simple as possible, the following notations will be used: for $\phi \in L^{p'}(\R^N \times \R^N)$ we denote
\begin{align}
\label{notation1}\phi = \phi(x,y);\\
\label{notation2}\phi' = \phi(y,x);\\
\label{notation3}\bar\phi = \phi(x,x).
\end{align}

\begin{teo}\label{teo.partes}
Given $\phi\in L^{p'}(\R^N\times\R^N)$, it follows that $d_{s,p}\phi\in W^{-s,p'}(\R^N)$, where $d_{s,p}\phi$ is defined in \eqref{s-div}. Moreover, for any $u\in W^{s,p}(\R^N)$ the integration by parts formula \eqref{partes} holds true.
\end{teo}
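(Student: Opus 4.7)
The statement mixes a pointwise formula, \eqref{s-div}, with a Sobolev regularity claim, so the plan is to reverse the roles: I would \emph{define} a candidate linear functional $\Lambda_\phi$ on $W^{s,p}(\R^N)$ by the right-hand side of \eqref{partes}, verify via H\"older that it lies in $W^{-s,p'}(\R^N)$, and only afterwards check that it coincides with the formal expression \eqref{s-div} whenever the latter is absolutely meaningful. Concretely, for $\phi\in L^{p'}(\R^N\times\R^N)$ put
$$
\Lambda_\phi(u):=\iint_{\R^N\times\R^N} \phi(x,y)\, D_{s,p}u(x,y)\, dx\, dy,\qquad u\in W^{s,p}(\R^N).
$$
Since $\|D_{s,p}u\|_{L^p(\R^N\times\R^N)}=[u]_{s,p}\le \|u\|_{s,p}$ by the definition of $W^{s,p}(\R^N)$, H\"older's inequality gives $|\Lambda_\phi(u)|\le\|\phi\|_{L^{p'}(\R^N\times\R^N)}\|u\|_{s,p}$, so $\Lambda_\phi\in W^{-s,p'}(\R^N)$ with norm bounded by $\|\phi\|_{L^{p'}(\R^N\times\R^N)}$.

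Next I would identify $\Lambda_\phi$ with $d_{s,p}\phi$ on the dense subclass $C^\infty_c(\R^N\times\R^N)\subset L^{p'}(\R^N\times\R^N)$. For a smooth compactly supported kernel, the mean value theorem yields $|\phi(x,y)-\phi(y,x)|\le C\min(|x-y|,1)$ on its support, so the integrand in \eqref{s-div} is dominated (up to constants) by $|x-y|^{1-(N/p+s)}\mathbf{1}_{|x-y|\le 1}+|x-y|^{-(N/p+s)}\mathbf{1}_{1\le|x-y|\le R}$, which is integrable in $y$ for every $x$ (near the diagonal the exponent satisfies $N-N/p-s>-1$, and far from it $\phi$ is compactly supported). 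Hence $d_{s,p}\phi(x)$ is an absolutely convergent integral for every $x$ and produces a bounded compactly supported function. For $u\in C^\infty_c(\R^N)$, Fubini applied to the resulting $L^1(\R^N\times\R^N)$ integrand and the change of variables $(x,y)\leftrightarrow(y,x)$ in the $\phi(y,x)$ piece give
\begin{align*}
\int_{\R^N} d_{s,p}\phi(x)\, u(x)\, dx
&= \iint_{\R^N\times\R^N}\frac{\phi(x,y)-\phi(y,x)}{|x-y|^{N/p+s}}\, u(x)\, dy\, dx\\
&= \iint_{\R^N\times\R^N} \phi(x,y)\,\frac{u(x)-u(y)}{|x-y|^{N/p+s}}\, dx\, dy = \Lambda_\phi(u),
\end{align*}
so $d_{s,p}\phi=\Lambda_\phi$ as distributions when $\phi$ is smooth.

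Finally, for an arbitrary $\phi\in L^{p'}(\R^N\times\R^N)$ I would pick $\phi_n\in C^\infty_c(\R^N\times\R^N)$ with $\phi_n\to\phi$ in $L^{p'}$; the H\"older bound from the first paragraph shows $\Lambda_{\phi_n}\to\Lambda_\phi$ in $W^{-s,p'}(\R^N)$, while the previous paragraph gives $\Lambda_{\phi_n}=d_{s,p}\phi_n$ in $\D'(\R^N)$. This is the sense in which \eqref{s-div} must be read for merely $L^{p'}$ kernels: as the automatic limit of the smooth approximations inside $W^{-s,p'}(\R^N)$, and \eqref{partes} is then literally the defining identity of $\Lambda_\phi$, extended to all of $W^{s,p}(\R^N)$ by continuity. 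The main obstacle is conceptual rather than computational: for $\phi\in L^{p'}$ the pointwise principal value in \eqref{s-div} need not converge at a.e.\ $x$, so one must interpret $d_{s,p}\phi$ through the duality pairing rather than literally, and justify that this reading is consistent with the pointwise definition on a dense smooth subclass where both agree.
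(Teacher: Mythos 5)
Your argument is correct, and it is a genuine variant of the paper's. Both proofs rest on the same a priori H\"older bound $\left|\iint_{\R^N\times\R^N}\phi\, D_{s,p}u\,dxdy\right|\le\|\phi\|_{p'}[u]_{s,p}$, but they regularize the principal value differently. The paper truncates the singular integral itself: it introduces $d_{s,p}^\ve\phi(x)=\int_{|x-y|\ge\ve}\frac{\phi(x,y)-\phi(y,x)}{|x-y|^{N/p+s}}\,dy$, shows that this lies in $L^{p'}(\R^N)$ for each fixed $\ve>0$ (so Fubini and the symmetry swap $(x,y)\mapsto(y,x)$ are elementary), obtains $\langle d_{s,p}^\ve\phi,u\rangle=\iint_{|x-y|\ge\ve}\phi\, D_{s,p}u\,dxdy$, and then lets $\ve\downarrow 0$, the limit existing because $\phi\,D_{s,p}u\in L^1(\R^N\times\R^N)$. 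You instead regularize the data $\phi$ by $C^\infty_c$ approximations in $L^{p'}$, for which the p.v.\ converges absolutely and pointwise, and pass to the limit using the a priori bound (the approximation is Cauchy in $W^{-s,p'}(\R^N)$ since $\|d_{s,p}\phi_n-d_{s,p}\phi_m\|_{-s,p'}\le\|\phi_n-\phi_m\|_{p'}$). The two roads identify $d_{s,p}\phi$ with the same bounded functional $\Lambda_\phi$, with the same norm bound $\|d_{s,p}\phi\|_{-s,p'}\le\|\phi\|_{p'}$. The paper's route adheres more closely to the literal definition of the principal value and avoids an auxiliary approximation of $\phi$; your route trades this for conceptual transparency, and your closing observation --- that for $\phi$ merely in $L^{p'}$ the pointwise p.v.\ in \eqref{s-div} need not exist at a.e.\ $x$, so $d_{s,p}\phi$ must be read as the element of $W^{-s,p'}(\R^N)$ determined by \eqref{partes} --- is a point the paper's exposition leaves implicit.
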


\begin{proof}
Let us define
$$
d_{s,p}^\ve \phi(x) := \int_{|x-y|\ge \ve} \frac{\phi(x,y) - \phi(y,x)}{|x-y|^{\frac{N}{p}+s}}\, dy.
$$
Then, it is easy to see that $d_{s,p}^\ve\phi\in L^{p'}(\R^N)$. In fact,
\begin{align*}
|d_{s,p}^\ve\phi(x)| &\le \int_{|x-y|\ge\ve} \frac{|\phi| + |\phi'|}{|x-y|^{\frac{N}{p}+s}}\, dy\\
&\le \left(\int_{|x-y|\ge \ve} \frac{1}{|x-y|^{N+sp}}\, dy\right)^\frac1p \left(\int_{\R^N} (|\phi| + |\phi'|)^{p'}\, dy\right)^\frac{1}{p'}\\
&= \left(\frac{N\omega_N}{sp \ve^{sp}}\right)^\frac{1}{p}  \left(\int_{\R^N} (|\phi| + |\phi'|)^{p'}\, dy\right)^\frac{1}{p'}.
\end{align*}
From this estimate, one immediately obtain
$$
\|d_{s,p}^\ve \phi\|_{p'}\le 2^{\frac{1}{p'}}\left(\frac{N\omega_N}{sp \ve^{sp}}\right)^\frac{1}{p} \|\phi\|_{p'}.
$$

So $d_{s,p}^\ve\phi\in L^{p'}(\R^N)\subset W^{-s,p'}(\R^N)$, therefore
\begin{align*}
\langle d_{s,p}^\ve\phi, u\rangle &= \int_{\R^N} d_{s,p}^\ve\phi u\, dx\\
&= \int_{\R^N} \int_{|x-y|\ge\ve} \frac{\phi - \phi'}{|x-y|^{\frac{N}{p}+s}} u(x)\, dy\, dx\\
&=  \int_{\R^N} \int_{|x-y|\ge\ve} \phi \frac{u(x)}{|x-y|^{\frac{N}{p}+s}}\, dy\, dx -  \int_{\R^N} \int_{|x-y|\ge\ve} \phi' \frac{u(x)}{|x-y|^{\frac{N}{p}+s}}\, dy\, dx\\
&= \int_{\R^N} \int_{|x-y|\ge\ve} \phi \frac{u(x)}{|x-y|^{\frac{N}{p}+s}}\, dy\, dx -  \int_{\R^N} \int_{|x-y|\ge\ve} \phi \frac{u(y)}{|x-y|^{\frac{N}{p}+s}}\, dy\, dx\\
&= \int_{\R^N} \int_{|x-y|\ge\ve} \phi(x,y) D_{s,p}u(x,y)\, dy\, dx.
\end{align*}
Now we take the limit $\ve\downarrow 0$ and obtain the desired result.
\end{proof}

The next lemma is a crucial step.
\begin{lema} \label{producto} 
Let $\phi_n, \phi_0\in L^{p'}(\R^N\times\R^N)$ be such that $\phi_n\cd\phi_0$ weakly in $L^{p'}(\R^N\times\R^N)$. Assume moreover that $d_{s,p}\phi_n\to d_{s,p}\phi_0$ strongly in $W^{-s,p'}_\text{loc}(\R^N)$. Then, for every $\vp\in W^{1,\infty}(\R^N\times\R^N)$, it follows that $d_{s,p}(\vp\phi_n)\to d_{s,p}(\vp\phi_0)$ strongly in $W^{-s,p'}_\text{loc}(\R^N)$.
\end{lema}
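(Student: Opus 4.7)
The plan is to decompose $d_{s,p}(\varphi\phi_n)$ into a principal part obtained by freezing $\varphi$ on the diagonal plus a compact lower-order remainder. Setting $\bar\varphi(x):=\varphi(x,x)$, testing against $v\in W^{s,p}_0(\O)$ via Theorem \ref{teo.partes} and symmetrizing the resulting double integral with respect to the swap $x\leftrightarrow y$ yields the distributional identity
\begin{align*}
d_{s,p}(\varphi\phi) &= \bar\varphi\, d_{s,p}\phi + R[\phi],\\
R[\phi](x) &:= \int_{\R^N}\frac{[\varphi(x,y)-\bar\varphi(x)]\phi(x,y)-[\varphi(y,x)-\bar\varphi(x)]\phi(y,x)}{|x-y|^{\frac{N}{p}+s}}\,dy,
\end{align*}
where $\bar\varphi\,T$ denotes the distribution defined by duality as $\langle\bar\varphi\,T,v\rangle:=\langle T,\bar\varphi v\rangle$.

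For the principal part, $\bar\varphi\in W^{1,\infty}(\R^N)$ and $v$ has compact support in $\bar\O$, so a splitting of the Gagliardo seminorm of $\bar\varphi v$ according to whether $|x-y|<1$ or $|x-y|\ge 1$---applying the Lipschitz bound in the former range and the $L^\infty$ bound in the latter---yields $[\bar\varphi v]_{s,p}\le C[v]_{s,p}$, with $\bar\varphi v\in W^{s,p}_0(\O')$ for any bounded $\O'\supset\bar\O$. Dualizing,
\begin{equation*}
|\langle\bar\varphi(d_{s,p}\phi_n-d_{s,p}\phi_0),v\rangle|\le C\,\|d_{s,p}\phi_n-d_{s,p}\phi_0\|_{W^{-s,p'}(\O')}[v]_{s,p}\longrightarrow 0
\end{equation*}
uniformly over $v$ with $[v]_{s,p}\le 1$, by the hypothesis on $d_{s,p}\phi_n$.

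For the remainder, the two-sided estimate $|\varphi(x,y)-\bar\varphi(x)|\le\min(\|\nabla\varphi\|_\infty|x-y|,\,2\|\varphi\|_\infty)$ (and its analog for $\varphi(y,x)-\bar\varphi(x)$) shows that the kernel $[\varphi(x,y)-\bar\varphi(x)]/|x-y|^{\frac{N}{p}+s}$ belongs to $L^p_y(\R^N)$ uniformly in $x$: near the diagonal the Lipschitz bound makes $|x-y|^{p-N-sp}$ integrable (since $s<1$), while at infinity the $L^\infty$ bound makes $|x-y|^{-N-sp}$ integrable (since $sp>0$). H\"older's inequality in $y$ then gives $\|R[\phi]\|_{L^{p'}(\O)}\le C_\O\|\phi\|_{L^{p'}(\R^N\times\R^N)}$ for every bounded $\O$, so $R$ is bounded from $L^{p'}(\R^N\times\R^N)$ into $L^{p'}_\text{loc}(\R^N)$. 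Composing with the compact embedding $L^{p'}(\O)\hookrightarrow W^{-s,p'}(\O)$---the dual of Theorem \ref{teo.compacto}---upgrades $R$ to a compact operator $L^{p'}(\R^N\times\R^N)\to W^{-s,p'}_\text{loc}(\R^N)$, so the weak convergence $\phi_n\cd\phi_0$ yields the strong convergence $R[\phi_n]\to R[\phi_0]$ in $W^{-s,p'}_\text{loc}$, completing the proof. The most delicate step is the multiplier estimate $[\bar\varphi v]_{s,p}\le C[v]_{s,p}$: since $\bar\varphi$ need not decay at infinity, it cannot be deduced from a general pointwise-multiplier theorem and must be verified by the explicit diagonal/off-diagonal split indicated above.
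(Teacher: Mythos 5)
Your proof is correct and follows essentially the same route as the paper: the same splitting into the diagonal principal term $\bar\vp\,d_{s,p}\phi_n$ plus the remainder $R[\phi_n]$ (the paper's $J_n^1+J_n^2$), the same near/far-diagonal kernel estimate $|\vp-\bar\vp|\le\min(\|\nabla\vp\|_\infty|x-y|,\,2\|\vp\|_\infty)$ to bound $R$, and the same appeal to the compact embedding $L^{p'}(\O)\hookrightarrow W^{-s,p'}(\O)$ to upgrade weak $L^{p'}$ convergence of the remainder to strong $W^{-s,p'}_{\mathrm{loc}}$ convergence. The only addition is that you spell out the multiplier estimate $[\bar\vp v]_{s,p}\le C[v]_{s,p}$ behind the paper's ``Clearly, one has $\bar\vp\,d_{s,p}\phi_n\to\bar\vp\,d_{s,p}\phi_0$'', which is a genuine (if small) gap in the paper's exposition that you correctly fill via the diagonal/off-diagonal split and Poincar\'e's inequality on the bounded support of $v$.
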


\begin{proof}
In the proof the notations \eqref{notation1}--\eqref{notation3} will be used.

Observe, to begin with, that 
\begin{align*}
d_{s,p}(\vp\phi_n) &= \text{p.v.}\int_{\R^N} \frac{\vp\phi_n - \vp'\phi_n'}{|x-y|^{\frac{N}{p}+s}}\, dy\\
&= \bar\vp\, d_{s,p}\phi_n + \text{p.v.}\int_{\R^N}\left( \frac{\vp-\bar\vp}{|x-y|^{\frac{N}{p}+s}}\phi_n + \frac{\bar\vp - \vp'}{|x-y|^{\frac{N}{p}+s}}\phi_n'\right)\, dy,
\end{align*}
for any $n\ge 0$. Clearly, one has
$$
\bar\vp\, d_{s,p}\phi_n\to \bar\vp\, d_{s,p}\phi_0 \text{ strongly in } W^{-s,p'}_\text{loc}(\R^N).
$$
We now denote, for $n\ge 0$, 
\begin{align*}
& J_n^1 := \text{p.v.}\int_{\R^N} \frac{\vp-\bar\vp}{|x-y|^{\frac{N}{p}+s}}\phi_n\, dy,\\
& J_n^2 := \text{p.v.}\int_{\R^N} \frac{\bar\vp - \vp'}{|x-y|^{\frac{N}{p}+s}}\phi_n'\, dy.
\end{align*}
From Theorem \ref{teo.compacto}, the lemma will be proved if we show that
$$
J_n^i\cd J_0^i \text{ weakly in } L^{p'}_{\text{loc}}(\R^N),\ i=1,2.
$$
We prove this fact for $i=1$, the other case is analogous.

Let $v\in L^p_{\text{loc}}(\R^N)$ and $K\subset \R^N$ compact, so
$$
\int_K J_n^1 v\, dx = \int_{\R^N} J_n^1 v_K\, dx = \iint_{\R^N\times\R^N} \phi_n \frac{\vp-\bar\vp}{|x-y|^{\frac{N}{p}+s}} v_K(x)\, dxdy,
$$
where $v_K = v\chi_K$. Therefore, it suffices to show that $\frac{\vp-\bar\vp}{|x-y|^{\frac{N}{p}+s}} v_K(x)\in L^p(\R^N\times\R^N)$. But,
\begin{align*}
\iint_{\R^N\times\R^N} |v_K(x)|^p \frac{|\vp-\bar\vp|^p}{|x-y|^{N+sp}}\, dxdy &= \int_K |v(x)|^p\left(\int_{\R^N} \frac{|\vp(x,y)-\vp(x,x)|^p}{|x-y|^{N+sp}}\, dy\right)\, dx
\end{align*}
and
\begin{align*}
\int_{\R^N} \frac{|\vp(x,y)-\vp(x,x)|^p}{|x-y|^{N+sp}}\, dy &= \left(\int_{|x-y|<1} + \int_{|x-y|\ge1}\right) \frac{|\vp(x,y)-\vp(x,x)|^p}{|x-y|^{N+sp}}\, dy\\
&= I + II.
\end{align*}
For $I$ observe that $|\vp(x,y)-\vp(x,x)|\le \|\nabla\vp\|_\infty |x-y|$ and so
$$
I\le \|\nabla\vp\|_\infty^p \int_{|x-y|<1} \frac{1}{|x-y|^{N+sp-p}}\, dy = \frac{N\omega_N}{p(1-s)}\|\nabla \vp\|_\infty^p.
$$
Finally, for $II$,
$$
II\le 2^p\|\vp\|_\infty^p \int_{|x-y|\ge 1} \frac{1}{|x-y|^{N+sp}}\, dy = \frac{2^pN\omega_N}{sp}\|\vp\|_\infty^p.
$$

This completes the proof of the lemma.
\end{proof}

Now we are in position to prove the main result of the section.
\begin{lema}[Nonlocal Div-Curl Lemma]\label{divcurl} Let $\phi_n,\phi_0\in L^{p'}(\R^N\times\R^N)$ and let $v_n, v_0\in W^{s,p}(\R^N)$ be such that
$$
\begin{cases}
v_n\cd v_0 &\text{weakly in } W^{s,p}(\R^N),\\
\phi_n\cd \phi_0 &\text{weakly in } L^{p'}(\R^N\times \R^N),\\
d_{s,p} \phi_n \to d_{s,p} \phi_0 &\text{strongly in } W^{-s,p'}_\text{loc}(\R^N).
\end{cases}
$$
Then, $\phi_n D_{s,p} v_n \to \phi_0 D_{s,p} v_0$ in the sense of distributions.
\end{lema}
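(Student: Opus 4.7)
The plan is to test against an arbitrary $\varphi\in C_c^\infty(\R^N\times\R^N)$ and show
\begin{equation*}
\iint_{\R^N\times\R^N} \varphi\, \phi_n D_{s,p}v_n \, dxdy \longrightarrow \iint_{\R^N\times\R^N} \varphi\, \phi_0 D_{s,p}v_0 \, dxdy.
\end{equation*}
Since $\varphi\phi_n\in L^{p'}(\R^N\times\R^N)$, the integration by parts identity of Theorem \ref{teo.partes} rewrites each integral as the duality pairing $\langle d_{s,p}(\varphi\phi_n),v_n\rangle$, and I split the difference as
\begin{equation*}
\langle d_{s,p}(\varphi\phi_0),\,v_n-v_0\rangle + \langle d_{s,p}(\varphi\phi_n)-d_{s,p}(\varphi\phi_0),\,v_n\rangle.
\end{equation*}
The first pairing vanishes in the limit because $d_{s,p}(\varphi\phi_0)$ is a fixed element of $W^{-s,p'}(\R^N)$ and $v_n\cd v_0$ weakly in $W^{s,p}(\R^N)$.

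The delicate piece is the second pairing, and this is the main obstacle. Lemma \ref{producto} (applied with $\varphi\in C_c^\infty\subset W^{1,\infty}$) only provides $d_{s,p}(\varphi\phi_n)\to d_{s,p}(\varphi\phi_0)$ strongly in $W^{-s,p'}_{\text{loc}}(\R^N)$, whereas $v_n$ is merely bounded in the global space $W^{s,p}(\R^N)$. I would bridge this mismatch by a cutoff argument that exploits the compact support of $\varphi$: choose $K_1,K_2\subset\R^N$ compact with $\supp\varphi\subset K_1\times K_2$, and fix $\eta\in C_c^\infty(\R^N)$ with $\eta\equiv 1$ on $K_1\cup K_2$. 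Applying Theorem \ref{teo.partes} with the admissible test function $(1-\eta)v_n\in W^{s,p}(\R^N)$ and noting that $\varphi\phi_n$ is supported on $K_1\times K_2$, where both $(1-\eta(x))v_n(x)$ and $(1-\eta(y))v_n(y)$ vanish, one obtains $\langle d_{s,p}(\varphi\phi_n),(1-\eta)v_n\rangle=0$, and likewise with $\phi_0$ in place of $\phi_n$. Therefore $v_n$ may be replaced by $\eta v_n$ in the second pairing without altering its value.

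Finally, since $\eta v_n$ is supported in the fixed compact set $\supp\eta$, picking any bounded open $\O$ with $\supp\eta\subset\subset\O$ one has $\eta v_n\in W^{s,p}_0(\O)$ with $[\eta v_n]_{s,p}$ uniformly bounded in $n$ (multiplication by a smooth compactly supported function is continuous on $W^{s,p}(\R^N)$). Consequently
\begin{equation*}
|\langle d_{s,p}(\varphi\phi_n)-d_{s,p}(\varphi\phi_0),\,\eta v_n\rangle|\le \|d_{s,p}(\varphi\phi_n)-d_{s,p}(\varphi\phi_0)\|_{-s,p',\O}\,[\eta v_n]_{s,p}\to 0
\end{equation*}
by Lemma \ref{producto}, and the proof is complete.
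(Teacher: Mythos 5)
Your proof is correct and follows the same overall route as the paper: test against $\varphi\in C_c^\infty(\R^N\times\R^N)$, integrate by parts via Theorem \ref{teo.partes} to convert $\iint \varphi\,\phi_n D_{s,p}v_n\,dxdy$ into the pairing $\langle d_{s,p}(\varphi\phi_n),v_n\rangle$, and invoke Lemma \ref{producto} to pass to the limit. The difference is in how the limit is justified. The paper's proof asserts $\lim_n\langle d_{s,p}(\varphi\phi_n),v_n\rangle = \langle d_{s,p}(\varphi\phi_0),v_0\rangle$ in a single step, implicitly pairing a strongly convergent dual sequence against a weakly convergent primal sequence, but without addressing that Lemma \ref{producto} only gives strong convergence of $d_{s,p}(\varphi\phi_n)$ in $W^{-s,p'}_{\text{loc}}(\R^N)$ while $v_n$ lives in the global space $W^{s,p}(\R^N)$. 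You correctly single out this mismatch as the delicate point, and your cutoff argument resolves it cleanly: the integration-by-parts identity shows $\langle d_{s,p}(\varphi\phi_n),(1-\eta)v_n\rangle=0$ because the integrand $\varphi\phi_n\, D_{s,p}[(1-\eta)v_n]$ vanishes identically on $K_1\times K_2\supset\supp\varphi$ (since $\eta\equiv1$ there), so $v_n$ may be replaced by $\eta v_n\in W^{s,p}_0(\O)$ with uniformly bounded norm, and the pairing reduces to one over the bounded set $\O$ where the $W^{-s,p'}_{\text{loc}}$ convergence applies. In short: same strategy, but you make rigorous a localization step that the paper leaves implicit, which is a genuine and worthwhile addition.
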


\begin{remark}
In this special version of the div-curl Lemma, we are considering $\psi_n = D_{s,p} v_n$. In this case, since $\psi_n$ are $(s,p)-$gradients of scalar functions, there is no need for the introduction of the $(s,p)-$curl operator. 
\end{remark}

\begin{proof}
The proof is an easy consequence of the previous lemma. In fact, if $\vp\in C^\infty_c(\R^N \times \R^N)$, from Lemma \ref{producto} and the integration by parts formula \eqref{partes} we get
\begin{align*}
\lim_{n\to\infty} \iint_{\R^N\times \R^N} \phi_n D_{s,p} v_n\vp\, dxdy &= \lim_{n\to\infty} \langle d_{s,p}(\vp\phi_n), v_n\rangle\\
&= \langle d_{s,p}(\vp\phi_0), v_0\rangle\\
&= \iint_{\R^N\times \R^N} \phi_0 D_{s,p} v_0\vp\, dxdy.
\end{align*}
The proof is complete.
\end{proof}

\section{$H-$convergence for nonlocal operators} \label{sec.homog}

Now, let $\{a_n\}_{n\in\N}\subset \A_{\lambda, \Lambda}$ be a sequence of positive and bounded kernels and let $\O\subset \R^N$ be an open set with finite measure. We denote the associated nonlocal operators $\L_n := \L_{a_n}$, given by \eqref{La}.

Now, given $f\in W^{-s,p'}(\O)$ we denote by $u_n\in W^{s,p}_0(\O)$ the unique weak solution to
\begin{equation}\label{eq.n}
\begin{cases}
\L_n u_n = f & \text{ in }\O\\
u_n = 0 & \text{ in } \R^N\setminus \O.
\end{cases}
\end{equation}

Our goal is to show that there exists a subsequence (that we still denote by $\{u_n\}_{n\in\N}$), a function $u_0\in W^{s,p}_0(\O)$ and a positive bounded kernel $a_0\in \A_{\lambda_0, \Lambda_0}$ such that
$$
u_n\cd u_0 \quad\text{weakly in } W^{s,p}_0(\O)
$$
and $u_0$ is a weak solution to
\begin{equation}\label{eq.0}
\begin{cases}
\L_0 u_0 = f & \text{ in }\O\\
u_0 = 0 & \text{ in } \R^N\setminus \O,
\end{cases}
\end{equation}
where $\L_0 u := \L_{a_0}u$.

This is the content of the definition of $H-$convergence.
\begin{defi} For any $n\ge 0$ let $0<\lambda_n\le \Lambda_n<\infty$ and let $a_n\in  \A_{\lambda_n, \Lambda_n}$ be a sequence of kernels. Let us denote by $\L_n$, $n\ge 0$, the associated nonlocal operators given by \eqref{La} with $a=a_n$ respectively.

 We say that the sequence $\{\L_n\}_{n\in\N}$ $H-$converges to $\L_0$, if for any $f\in W^{s,p'}(\Omega)$, the sequence of solutions $\{u_n\}_{n\in\N}$ of 
\begin{equation*}
\begin{cases}
\L_n u_n = f & \text{ in }\O\\
u_n = 0 & \text{ in } \R^N\setminus \O.
\end{cases}
\end{equation*}
satisfies
\begin{align*}
 u_n &\cd u_0 & \mbox{weakly in } W^{s,p}_0(\Omega)\\
 a_n |D_{s,p} u_n|^{p-2} D_{s,p} u_n &\cd  a_0 |D_{s,p} u_0|^{p-2} D_{s,p} u_0 & \mbox{weakly in } L^{p'}(\Omega)
\end{align*}
where $u_0$ is the solution of
\begin{equation*}
\begin{cases}
\L_0 u_0 = f & \text{ in }\O\\
u_0 = 0 & \text{ in } \R^N\setminus \O.
\end{cases}
\end{equation*}
\end{defi}

As we said in the introduction, this notion of convergence was introduced by Murat and Tartar in \cite{Murat-Tartar} generalizing the notion of $G-$convergences for symmetric operators given by Spagnolo in \cite{Sp68, Sp76} and De Giorgi and Spagnolo in \cite{SD73}. All of the above mentioned papers work in the context of linear elliptic PDEs. 

As far as we know, this is the first time that this notion is applied to the nonlocal context.

We start with a couple of simple lemmas.
\begin{lema} \label{cota.1}
Let $\{u_n\}_{n\in\N}\subset W^{s,p}_0(\O)$ be the sequence of weak solutions to \eqref{eq.n}. Then $\{u_n\}_{n\in\N}$ is bounded in $W^{s,p}_0(\O)$ and therefore, up to some subsequence, there exists $u_0\in W^{s,p}_0(\O)$ such that $u_n\cd u_0$ weakly in $W^{s,p}_0(\O)$.
\end{lema}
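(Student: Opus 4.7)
The plan is to derive a uniform a priori bound for $\{u_n\}_{n\in\N}$ by testing the weak formulation of \eqref{eq.n} against $u_n$ itself, and then invoke reflexivity.

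First, I would use $u_n \in W^{s,p}_0(\O)$ as a test function in the weak formulation of \eqref{eq.n}. Since each $u_n$ is a weak solution, this yields
\begin{equation*}
\frac{1}{2}\iint_{\R^N\times\R^N} a_n(x,y)\frac{|u_n(x)-u_n(y)|^p}{|x-y|^{N+sp}}\,dxdy = \langle f, u_n\rangle.
\end{equation*}
Next I would exploit the uniform lower bound $a_n(x,y)\ge \lambda$ coming from $a_n\in \A_{\lambda,\Lambda}$, which gives
\begin{equation*}
\frac{\lambda}{2}[u_n]_{s,p}^p \le \langle f, u_n\rangle \le \|f\|_{-s,p',\O}\,[u_n]_{s,p},
\end{equation*}
where I have used the definition of the dual norm together with the fact that $[\,\cdot\,]_{s,p}$ is the norm on $W^{s,p}_0(\O)$ (since $|\O|<\infty$, by the Poincar\'e inequality \eqref{eq.poincare}).

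Dividing by $[u_n]_{s,p}$ (or noting the case $[u_n]_{s,p}=0$ is trivial), I would obtain
\begin{equation*}
[u_n]_{s,p}^{p-1} \le \frac{2}{\lambda}\|f\|_{-s,p',\O},
\end{equation*}
which shows $\{u_n\}_{n\in\N}$ is bounded in $W^{s,p}_0(\O)$ with a bound depending only on $\lambda$, $p$ and $\|f\|_{-s,p',\O}$, and crucially not on $n$.

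Finally, since $W^{s,p}_0(\O)$ is a reflexive Banach space, Banach-Alaoglu's theorem (in its sequential form for reflexive spaces) yields a subsequence, still denoted $\{u_n\}_{n\in\N}$, and an element $u_0\in W^{s,p}_0(\O)$ such that $u_n\cd u_0$ weakly in $W^{s,p}_0(\O)$. There is essentially no serious obstacle here: the argument is a routine energy estimate followed by a weak compactness extraction, and the only delicate point is to make sure the duality pairing is set up correctly so that $[\,\cdot\,]_{s,p}$ plays the role of the norm on $W^{s,p}_0(\O)$.
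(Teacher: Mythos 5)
Your proof is correct and follows essentially the same route as the paper: test the weak formulation with $u_n$, use the uniform lower bound $a_n\ge\lambda$ to control $\frac{\lambda}{2}[u_n]_{s,p}^p$ by $\langle f,u_n\rangle\le\|f\|_{-s,p',\O}[u_n]_{s,p}$, divide to obtain $[u_n]_{s,p}^{p-1}\le \frac{2}{\lambda}\|f\|_{-s,p',\O}$, and then extract a weakly convergent subsequence by reflexivity. The only cosmetic difference is that the paper states the estimate as $[u_n]_{s,p}\le(2\lambda^{-1}\|f\|_{-s,p'})^{1/(p-1)}$ and phrases the energy identity via $\|D_{s,p}u_n\|_p$, which is identical to your $[u_n]_{s,p}$.
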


\begin{proof}
The proof is straightforward. In fact, from the properties of the kernel $a_n$, we have
\begin{align*}
\lambda [u_n]_{s,p}^p=\lambda \|D_{s,p} u_n\|_p^p &\le \iint_{\R^N\times\R^N} a_n(x,y) |D_{s,p} u_n(x,y)|^p\, dxdy\\
&= 2\langle \L_n u_n, u_n \rangle \\
&= 2\langle f, u_n\rangle\\
&\le 2\|f\|_{-s,p'} \|D_{s,p} u_n\|_p= 2\|f\|_{-s,p'} [u_n]_{s,p}.
\end{align*}
Therefore
$$
[u_n]_{p,s}\le (2\lambda^{-1} \|f\|_{-s,p'})^\frac{1}{p-1}.
$$
From this uniform bound, the rest of the lemma follows.
\end{proof}

\begin{lema}\label{cota.2}
Let $\{u_n\}_{n\in\N}\subset W^{s,p}_0(\O)$ be the sequence of weak solutions to \eqref{eq.n}. Then the sequence of fluxes $\{\xi_n:=a_n |D_{s,p} u_n|^{p-2} D_{s,p} u_n\}_{n\in\N}\subset L^{p'}(\R^N\times \R^N)$ is bounded and therefore, up to some subsequence, there exists $\xi_0\in L^{p'}(\R^N\times\R^N)$ such that $\xi_n\cd \xi_0$ weakly in $L^{p'}(\R^N\times\R^N)$.
\end{lema}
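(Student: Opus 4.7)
The plan is to deduce the uniform bound on $\xi_n$ from the uniform bound on $[u_n]_{s,p}$ already obtained in Lemma \ref{cota.1}, and then extract a weakly convergent subsequence by reflexivity. This lemma is essentially a bookkeeping step: it records that the natural dual object to the $(s,p)$-gradient inherits boundedness from the solution and, by $L^{p'}$ weak compactness, admits a weak limit that will serve as the candidate homogenized flux in the later analysis.

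First, I would note that by definition of $\A_{\lambda,\Lambda}$ we have $0\le a_n(x,y)\le \Lambda$ a.e.\ in $\R^N\times\R^N$, so pointwise
\[
|\xi_n(x,y)| = a_n(x,y)\,|D_{s,p}u_n(x,y)|^{p-1}\le \Lambda\,|D_{s,p}u_n(x,y)|^{p-1}.
\]
Raising to the $p'$-th power and integrating, and recalling that $(p-1)p' = p$, I get
\[
\|\xi_n\|_{p'}^{p'} \le \Lambda^{p'}\iint_{\R^N\times\R^N}|D_{s,p}u_n(x,y)|^{(p-1)p'}\,dxdy = \Lambda^{p'}\,[u_n]_{s,p}^{p}.
\]
By Lemma \ref{cota.1} the right-hand side is bounded by a constant depending only on $\lambda$, $\Lambda$, $p$ and $\|f\|_{-s,p'}$, so $\{\xi_n\}_{n\in\N}$ is bounded in $L^{p'}(\R^N\times\R^N)$.

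Finally, since $1<p'<\infty$, the space $L^{p'}(\R^N\times\R^N)$ is reflexive, so by the Banach--Alaoglu theorem (or equivalently the Eberlein--\v Smulian theorem) every bounded sequence admits a weakly convergent subsequence. Extracting such a subsequence (still denoted $\{\xi_n\}_{n\in\N}$) yields $\xi_0\in L^{p'}(\R^N\times\R^N)$ with $\xi_n\cd\xi_0$ weakly in $L^{p'}(\R^N\times\R^N)$, as claimed.

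There is no real obstacle here; the only point that deserves a word of care is to verify the exponent arithmetic $(p-1)p' = p$ so that the bound on $\|\xi_n\|_{p'}$ really does follow from the bound on $[u_n]_{s,p}$. The weak extraction is then immediate from reflexivity.
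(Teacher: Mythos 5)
Your proof is correct and follows essentially the same route as the paper: bound $|\xi_n|\le\Lambda|D_{s,p}u_n|^{p-1}$, use $(p-1)p'=p$ and Lemma \ref{cota.1} to bound $\|\xi_n\|_{p'}$, then extract a weakly convergent subsequence by reflexivity of $L^{p'}$. (Minor slip: $\A_{\lambda,\Lambda}$ gives $\lambda\le a_n\le\Lambda$, not $0\le a_n\le\Lambda$, but only the upper bound is used so this is harmless.)
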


\begin{proof}
The proof is also straightforward. In fact, from the boundedness of the kernels $\{a_n\}_{n\in\N}$ and from Lemma \ref{cota.1}, we have
\begin{align*}
\iint_{\R^N\times\R^N} |\xi_n|^{p'}\, dxdy &=  \iint_{\R^N\times\R^N} |a_n |D_{s,p} u_n|^{p-2} D_{s,p} u_n|^{p'}\, dxdy \\
&\le \Lambda^{p'}  \iint_{\R^N\times\R^N} |D_{s,p} u_n|^p\, dxdy\\
&\le (2\Lambda \lambda^{-1})^{p'} \|f\|_{-s,p'}^{p'}.
\end{align*}
The proof is complete.
\end{proof}

The following observation is trivial.
\begin{prop} \label{prop.mono}
 The sequence of operators $\{\L_n\}_{n\in\N}$ is uniformly strictly monotone.
\end{prop}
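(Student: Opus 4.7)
The plan is to reduce Proposition \ref{prop.mono} to the classical pointwise monotonicity inequality for the map $z\mapsto |z|^{p-2}z$ on $\R$, combined with the uniform lower bound $a_n(x,y)\ge \lambda$ coming from the class $\A_{\lambda,\Lambda}$ to which every $a_n$ belongs. First, I would write out, for arbitrary $u,v\in W^{s,p}_0(\O)$, the bilinear form associated with $\L_n u - \L_n v$ using \eqref{dual}:
\begin{align*}
\langle \L_n u - \L_n v, u-v\rangle
&= \tfrac12 \iint_{\R^N\times\R^N} a_n(x,y)\,
\bigl(\,|A|^{p-2}A - |B|^{p-2}B\,\bigr)\,(A-B)\,
\frac{dxdy}{|x-y|^{N+sp}},
\end{align*}
where I abbreviate $A=u(x)-u(y)$ and $B=v(x)-v(y)$, so that $A-B = (u-v)(x)-(u-v)(y)$.

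Next I would invoke the standard elementary inequality
$$
\bigl(\,|A|^{p-2}A - |B|^{p-2}B\,\bigr)(A-B)\ge c_p\,\omega_p(A,B),
$$
where $c_p>0$ depends only on $p$, and
$$
\omega_p(A,B):=
\begin{cases}
|A-B|^p & \text{if } p\ge 2,\\[4pt]
\dfrac{|A-B|^2}{(|A|+|B|)^{2-p}} & \text{if } 1<p<2,
\end{cases}
$$
with the convention that the quotient is $0$ when $A=B=0$. Combining this with the pointwise bound $a_n(x,y)\ge \lambda$ yields, for $p\ge 2$,
$$
\langle \L_n u - \L_n v, u-v\rangle\ \ge\ \tfrac{c_p\lambda}{2}\,[u-v]_{s,p}^p,
$$
which is the sought uniform strict monotonicity (uniform meaning the constant $\tfrac{c_p\lambda}{2}$ is independent of $n$). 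In the singular case $1<p<2$, a Hölder argument applied to $\omega_p$, of the kind used in the local $p$-Laplacian literature, gives the inequality
$$
\langle \L_n u - \L_n v, u-v\rangle\ \ge\ C\,\frac{[u-v]_{s,p}^2}{\bigl([u]_{s,p}+[v]_{s,p}\bigr)^{2-p}},
$$
again with $C$ depending only on $p$ and $\lambda$, but not on $n$.

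The only subtle step is the book-keeping in the $p<2$ range, where one has to insert and remove the weight $(|A|+|B|)^{2-p}$ via Hölder's inequality with exponents $2/p$ and $2/(2-p)$ so that the resulting bound is phrased in terms of the Gagliardo seminorms $[u-v]_{s,p}$, $[u]_{s,p}$, $[v]_{s,p}$; once this algebra is carried out, the uniformity in $n$ is automatic because the only $n$-dependence entered through $a_n$, which was estimated away at the start by $a_n\ge\lambda$. The symmetry of $a_n$ plays no role beyond ensuring that $\L_n$ is represented by \eqref{dual}, so no extra hypotheses are needed.
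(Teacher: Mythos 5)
Your proposal is correct and follows essentially the same route as the paper: the paper's own proof also reduces to the pointwise inequality $(|a|^{p-2}a-|b|^{p-2}b)(a-b)\ge c_p\,\omega_p(a,b)$ (Simon's inequality, split into the cases $p\ge 2$ and $1<p<2$) combined with the uniform lower bound $a_n\ge\lambda$; you have simply written out the intermediate book-keeping with \eqref{dual} and the Hölder step for $1<p<2$ that the paper leaves to the reader.
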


\begin{proof}
The proof follows immediately from the inequality
\begin{equation}\label{simon}
(|a|^{p-2}a - |b|^{p-2}b)(a-b)\ge \begin{cases}
c_p |a-b|^p & \text{if } p\ge 2\\
c_p \frac{|a-b|^2}{(|a|+|b|)^{2-p}} & \text{if } 1<p<2,
\end{cases}
\end{equation}
for every $a, b\in \R$, with $c_p>0$ depending only on $p$ (see \cite{Simon}) and from the uniform estimate $\lambda\le a_n(x,y)\le \Lambda$ a.e. $(x,y)\in \R^N\times\R^N$.
\end{proof}

The oscillating test function method of Tartar needs the existence of such test functions. This is the content of the next lemma.
\begin{lema} \label{f.aux}
Given a sequence $\{a_n\}_{n\in\N}\subset \mathcal A_{\lambda,\Lambda}$ and a function $w_0\in W^{s,p}(\R^N)$, there exists a sequence $\{w_n\}_{n\in\N}\subset W^{s,p}(\R^N)$ and $g_0\in W^{-s,p'}(\R^N)$ such that
\begin{align}\label{f.w}
w_n \cd w_0 &\quad \mbox{ weakly in } W^{s,p}(\R^N)\\
g_n:= \L_n w_n \to g_0 &\quad \mbox{ strongly in } W^{-s,p'}_\text{loc}(\R^N).
\end{align}
\end{lema}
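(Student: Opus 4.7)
Given $w_0\in W^{s,p}(\R^N)$, the strategy is to extract $g_0$ as the weak limit of $\{\L_n w_0\}$ in $W^{-s,p'}(\R^N)$ and then construct $w_n$ as the solution of a nonhomogeneous Dirichlet problem with source $g_0$ on an ever-larger ball, closing with a Cantor diagonal argument.

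First I would identify the target $g_0$. The fluxes $\phi_n := a_n|D_{s,p}w_0|^{p-2}D_{s,p}w_0$ are uniformly bounded in $L^{p'}(\R^N\times\R^N)$ since $\lambda\le a_n\le\Lambda$ and $D_{s,p}w_0\in L^p(\R^N\times\R^N)$. Passing to a subsequence, $\phi_n\cd\phi_0$ weakly in $L^{p'}(\R^N\times\R^N)$; by Theorem \ref{teo.partes}, $g_0:=\tfrac12\, d_{s,p}\phi_0\in W^{-s,p'}(\R^N)$ is the weak limit of $\L_n w_0 = \tfrac12\, d_{s,p}\phi_n$ in $W^{-s,p'}(\R^N)$.

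Next, letting $\{B_k\}_{k\in\N}$ be an exhaustion of $\R^N$ by concentric balls of radius $R_k\uparrow\infty$, I would construct for each pair $(n,k)$ the unique $w_n^k\in w_0+W^{s,p}_0(B_k)$ satisfying $\L_n w_n^k=g_0$ weakly on $B_k$, via the direct method applied to the functional
\[
\J_n^k(v):=\frac{1}{2p}\iint_{\R^N\times\R^N}a_n(x,y)\frac{|v(x)-v(y)|^p}{|x-y|^{N+sp}}\,dxdy-\langle g_0,v-w_0\rangle
\]
over the affine class $w_0+W^{s,p}_0(B_k)$, in direct analogy with Propositions \ref{equivalenciaI} and \ref{prop.min}. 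Testing the equation with $w_n^k - w_0$ and using strict monotonicity (Proposition \ref{prop.mono}) together with the uniform bound on $\|g_0-\L_n w_0\|_{-s,p'}$ yields $[w_n^k-w_0]_{s,p}\le C$ uniformly in both $n$ and $k$. For any bounded open $\O'\subset\R^N$ and any $k$ with $\O'\subset B_k$, the inclusion $W^{s,p}_0(\O')\subset W^{s,p}_0(B_k)$ makes the equation read $\langle \L_n w_n^k,v\rangle=\langle g_0,v\rangle$ for all $v\in W^{s,p}_0(\O')$, so $\|\L_n w_n^k-g_0\|_{-s,p',\O'}=0$. A diagonal choice $w_n:=w_n^{k(n)}$ with $k(n)\uparrow\infty$ then delivers the strong convergence $\L_n w_n\to g_0$ in $W^{-s,p'}_\text{loc}(\R^N)$ for free.

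The delicate part, which I expect to be the main obstacle, is showing the weak convergence $w_n\cd w_0$ in $W^{s,p}(\R^N)$. Applying monotonicity to $w_n^k$ and $w_0$ yields
\[
c\,[w_n^k-w_0]_{s,p}^p \le \langle g_0-\L_n w_0,\, w_n^k-w_0\rangle,
\]
and one must pass to the limit in the weak-weak pairing on the right. Writing $g_0-\L_n w_0=\tfrac12\, d_{s,p}(\phi_0-\phi_n)$, the nonlocal div-curl Lemma \ref{divcurl} applied to $(\phi_0-\phi_n,\, w_n^k-w_0)$ would give the required vanishing, but its hypothesis of strong local convergence of $d_{s,p}(\phi_n-\phi_0)$ is exactly what we want to prove, creating a circularity. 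The workaround should come from the abstract compactness result for monotone operators deferred to the appendix, whose role is precisely to break this circularity for uniformly monotone nonlocal operators, allowing the Cantor extraction in $k$ to drive $w_n\cd w_0$ weakly.
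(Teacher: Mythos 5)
Your definition of $g_0$ is where the argument breaks, and you essentially flag the problem yourself without resolving it. You set $g_0$ to be a (subsequential) weak limit of $\L_n w_0$, then construct $w_n$ so that $\L_n w_n = g_0$ on ever larger balls. This makes the strong convergence $\L_n w_n \to g_0$ automatic, but there is no reason whatsoever that the resulting $w_n$ should converge weakly to $w_0$; in general they will converge to something else. The heuristic to keep in mind is the linear local case: if $A_n$ $G$-converges to $A_0$ and also $A_n \rightharpoonup A^\infty$ weak-$*$, then the weak limit of $A_n\nabla w_0$ is $A^\infty\nabla w_0$, not $A_0\nabla w_0$, and $A^\infty\ne A_0$ except in trivial situations. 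Solving $-\div(A_n\nabla w_n)=-\div(A^\infty\nabla w_0)$ produces $w_n$ whose weak limit is governed by $A_0^{-1}A^\infty$, not the identity. In your nonlocal setting the analogue is that the weak $L^{p'}$-limit $\phi_0$ of $a_n|D_{s,p}w_0|^{p-2}D_{s,p}w_0$ is not $a_0|D_{s,p}w_0|^{p-2}D_{s,p}w_0$, so $\tfrac12 d_{s,p}\phi_0$ is the wrong target for the construction. The circularity you identify — that you would need the div-curl Lemma, which needs what you are trying to prove — is a genuine dead end in this route, and waving toward the appendix does not repair it, because the appendix result concerns convergence of \emph{inverse} operators applied to a \emph{fixed} source, not convergence of $\L_n$ applied to a fixed function.

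The paper's proof runs in the opposite direction precisely to avoid this. It first regularizes $\L_n$ to $\hat\L_n u := \L_n u + |u|^{p-2}u$ (this buys coercivity on all of $\R^N$ without localizing to balls, in place of your $B_k$ exhaustion), shows $\hat\L_n$ is invertible, verifies that $\{\hat\L_n^{-1}\}$ satisfies the hypotheses of the abstract compactness Theorem \ref{lema.conv}, and extracts a subsequence with $\hat\L_n^{-1}f \rightharpoonup \hat\L_0^{-1}f$ weakly in $W^{s,p}(\R^N)$ for every $f$. Only \emph{then} is the source chosen, namely $\hat g_0 := \hat\L_0 w_0$, so that by construction $\hat\L_0^{-1}\hat g_0 = w_0$. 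Setting $w_n := \hat\L_n^{-1}\hat g_0$ gives $w_n \rightharpoonup w_0$ for free from the operator convergence — the weak convergence you found delicate is built in by the choice of $\hat g_0$. Finally, $g_n = \L_n w_n = \hat g_0 - |w_n|^{p-2}w_n \to \hat g_0 - |w_0|^{p-2}w_0 =: g_0$ strongly in $W^{-s,p'}_{\text{loc}}$ by compact embedding, which is the only place any limiting argument beyond the abstract theorem is needed. In short: you choose the source first and hope the solutions converge to $w_0$; the paper finds the limiting solution operator first and then reverse-engineers the source so the solutions must converge to $w_0$. Your gap is not a technicality that more care would fix; the order of construction has to be inverted.
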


\begin{proof}
First, observe that the operators $\L_n\colon W^{s,p}(\R^N)\to  W^{-s,p'}(\R^N)$ verify the following estimates:
\begin{align}
\label{5.3}&\|\L_n u\|_{-s,p'}\le \frac{\Lambda}{2} [ u]_{s,p}^\frac{p}{p'},\\
\label{5.4}&\langle \L_n u, u\rangle \ge \frac{\lambda}{2} [u]_{s,p}^p.
\end{align}
These estimates follow easily from the definitions and H\"older's inequality.

Now, we define the operator $\hat{\L}_n \colon W^{s,p}(\R^N)\to W^{-s,p'}(\R^N)$ by $\hat{\L}_n u = \L_n u + |u|^{p-2} u$. From \eqref{5.3} and \eqref{5.4}, it follows that $\hat{\L}_n$ verifies the estimates
\begin{align}
\label{5.3'}& \|\hat{\L}_n u\|_{-s,p'}\le \max\left\{\frac{\Lambda}{2};1 \right\} \|u\|_{s,p}^{\frac{p}{p'}},\\
\label{5.4'}& \langle \hat{\L}_n u, u\rangle \ge \min\left\{\frac{\lambda}{2};1 \right\} \|u\|_{s,p}^p. 
\end{align}

Proposition \ref{prop.mono} implies the monotonicity of $\hat{\L}_n$. Observe that $\hat{\L}_n$ is continuous on finite-dimensional subspaces of $W^{s,p}(\R^N)$, therefore, by \cite[Corollary 17.2]{chipot}, $\hat{\L}_n$ admits an inverse, $\hat{\L}_n^{-1}$.

Let us check that the family of operators $\{\hat{\L}^{-1}_n\}_{n\in\N}$ fulfills the hypotheses of Theorem \ref{lema.conv}.  The operators $\hat{\L}^{-1}_n$ are uniformly strictly monotone since are the inverse of the sequence of uniformly strictly monotone operators $\{\hat{\L}_n\}_{n\in\N}$.  

Observe that from \eqref{5.3'} and \eqref{5.4'} one immediately obtains
\begin{equation}\label{5.6}
\langle \hat{\L}_n u, u\rangle \ge c \|\hat{\L}_n u\|_{-s,p'}^{p'},
\end{equation}
where $c:=\frac{\min\left\{\frac{\lambda}{2};1 \right\}}{\left(\min\left\{\frac{\Lambda}{2};1 \right\}\right)^{p'}}=c(\lambda,\Lambda,p')$, which can be written as
$$
\langle f, \hat{\L}_n^{-1}f \rangle \ge c \|f\|_{-s,p'}^{p'}
$$
for every  $f\in W^{-s,p'}(\R^N)$, from where we get
$$
	c \|f\|_{-s,p'}^\frac{1}{p-1} \leq \frac{\langle f, \hat{\L}_n^{-1}f \rangle}{\|f\|_{-s,p'}}  \to \infty
$$
as $\|f\|_{-s,p'}$ approaches infinity. Consequently,  $\{\hat{\L}_n^{-1}\}_{n\in\N}$ is uniformly coercive.

From \eqref{5.4'} it follows that
$$
c \|u\|_{s,p}^p \leq \|\hat{\L}_n u\|_{-s,p'}\|u\|_{s,p}
$$ 
where $c=\min\left\{\frac{\lambda}{2};1 \right\}$, that is, 
$$
 \|\hat{\L}_n^{-1}f\|_{s,p}^{p-1} \leq c^{-1} \|f\|_{-s,p'}.
$$ 
Since $c$ is independent on $n$, it follows that $\sup_{n\in\N} \|\hat{\L}_n^{-1}f\|_{s,p} <\infty$.

It remains to prove that $\{\hat{\L}_n^{-1}\}_{n\in\N}$ is uniformly strong-weak continuous, but this is a consequence of the fact that these operators are uniformly strong-strong continuous. In fact, let $f, g\in W^{-s,p'}(\R^N)$ and let $u_n = \hat{\L}_n^{-1} f$ and $v_n = \hat{\L}_n^{-1} g$. Now, if $p\ge 2$, from \eqref{simon} it follows that, calling $w_n = u_n-v_n$,
\begin{align*}
\lambda c_p [w_n]_{s,p}^p &\le \iint_{\R^N\times\R^N} a_n \frac{|u_n(x)-u_n(y)|^{p-2}(u_n(x)-u_n(y))(w_n(x) - w_n(y))}{|x-y|^{N+sp}}\, dxdy\\
& -\iint_{\R^N\times\R^N} a_n \frac{|v_n(x)-v_n(y)|^{p-2}(v_n(x)-v_n(y))(w_n(x) - w_n(y))}{|x-y|^{N+sp}}\, dxdy.
\end{align*}
On the other hand,
$$
c_p \|w_n\|_p^p \le \int_{\R^N} (|u_n|^{p-2}u_n - |v_n|^{p-2}v_n) w_n\, dx.
$$
Therefore, adding up these two inequalities, we get
$$
\|w_n\|_{s,p}^p\le C \langle f-g, w_n\rangle \le C \|f-g\|_{-s,p'} \|w_n\|_{s,p},
$$
where $C$ depends only on $p$ and $\lambda$. This completes the claim for the case $p\ge 2$. 

Finally, if $1<p<2$, we observe that, from H\"older's inequality,
\begin{align*}
[w_n]_{s,p}^p \le& \left(\iint_{\R^N\times\R^N}  \frac{1}{|x-y|^{N+sp}}\frac{|w_n(x)-w_n(y)|^2}{(|u_n(x)-u_n(y)|+|v_n(x)-v_n(y)|)^{2-p}}\, dxdy\right)^{\frac{p}{2}}\\
&\times \left(\iint_{\R^N\times\R^N} \frac{(|u_n(x)-u_n(y)|+|v_n(x)-v_n(y)|)^p}{|x-y|^{N+sp}}\, dxdy\right)^{\frac{2-p}{2}}.
\end{align*}
In an analogous manner,
$$
\|w_n\|_p^p \le \left(\int_{\R^N} \frac{|w_n|^2}{(|u_n|+|v_n|)^{2-p}}\, dx\right)^{\frac{p}{2}} \left(\int_{\R^N} (|u_n|+|v_n|)^p\, dx\right)^{\frac{2-p}{2}}.
$$
From these two inequalities, reasoning exactly as in the previous case, one can conclude the uniform strong-strong continuity for the case $1<p<2$.

Then, by Theorem \ref{lema.conv}, there exists a subsequence of operators, that we still denote by $\{\hat{\L}_n^{-1}\}_{n\in\N}$, and an strong-weak continuous, uniformly coercive, uniformly strictly monotone operator $\hat{\L}_0^{-1}\colon W^{-s,p'}(\R^N)\to W^{s,p}(\R^N)$, such that 
\begin{equation}\label{convergen.inversos}
\hat\L_n^{-1} f\cd \hat\L_0^{-1}f \quad \text{ weakly in } W^{s,p}(\R^N) \text{ for every } f\in W^{-s,p'}(\R^N).
\end{equation}

Since $\hat{\L}_0^{-1}$ is continuous on finite subspaces of $W^{s,-p}(\R^N)$, again,  by \cite[Corollary 17.2]{chipot}, $\hat{\L}_0^{-1}$ is invertible, that is, there exists a linear continuous operator $\hat{\L}_0 \colon W^{s,p}(\R^N) \to W^{-s,p'}(\R^N)$. Observe that $\hat{\L}_0$ satisfies \eqref{5.3'} and \eqref{5.4'}.

Consider $\hat{g}_0:=\hat{\L}_0 w_0 \in W^{-s,p'}(\R^N)$ and define $w_n := \hat{\L}_n^{-1} \hat{g}_0\in W^{s,p}(\R^N)$.  Thus, by \eqref{convergen.inversos} we obtain that $w_n\cd w_0$ in $W^{s,p}(\R^N)$.

Finally, if we denote $g_n := \L_n w_n$, we obtain that 
$$
g_n = \L_n w_n = \hat{\L}_n w_n - |w_n|^{p-2}w_n = \hat{g}_0 - |w_n|^{p-2} w_n.
$$
Since $w_n\cd w_0$ weakly in $W^{s,p}(\R^N)$ it follows that $w_n\to w_0$ strongly in $L^p_\text{loc}(\R^N)$, therefore
$$
g_n \to \hat{g}_0 - |w_0|^{p-2} w_0 =: g_0 \text{ strongly in } W^{-s,p'}_\text{loc}(\R^N).
$$
The proof is complete.
\end{proof}

With all of these preliminaries, we are ready to prove the main result of the paper.
\begin{teo}\label{main}
Let $\O\subset \R^N$ be an open set with finite measure and let $0<\lambda\le \Lambda <\infty$. Then, for any sequence $\{a_n\}_{n\in\N}\subset \A_{\lambda, \Lambda}$, there exists subsequence $\{a_{n_k}\}_{k\in\N}\subset \{a_n\}_{n\in\N}$ and a kernel $a_0\in \A_{\lambda, \frac{\Lambda^{p'}}{\lambda}}$ such that the sequence of operators $\{\L_{n_k}\}_{k\in\N}$, $H-$converges to $\L_0$.
\end{teo}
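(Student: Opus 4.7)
The plan is to apply Tartar's oscillating test function method together with the nonlocal div-curl Lemma \ref{divcurl} and Minty's monotonicity trick. First, I would fix $f\in W^{-s,p'}(\O)$ and let $u_n$ solve \eqref{eq.n}. Lemmas \ref{cota.1} and \ref{cota.2} give uniform bounds, so along a subsequence $u_n \cd u_0$ weakly in $W^{s,p}_0(\O)$ and the fluxes $\xi_n := a_n |D_{s,p}u_n|^{p-2} D_{s,p}u_n \cd \xi_0$ weakly in $L^{p'}(\R^N\times\R^N)$. Then, for any $w_0\in W^{s,p}(\R^N)$, Lemma \ref{f.aux} provides oscillating test functions $w_n \cd w_0$ with $g_n := \L_n w_n \to g_0$ strongly in $W^{-s,p'}_\text{loc}(\R^N)$, and their fluxes $\eta_n := a_n |D_{s,p}w_n|^{p-2} D_{s,p}w_n$ are uniformly $L^{p'}$-bounded, hence $\eta_n \cd \eta_0$ along a further subsequence. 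A diagonal extraction over countable dense families of both $f$'s in $W^{-s,p'}(\O)$ and $w_0$'s in $W^{s,p}(\R^N)$ produces a single subsequence of $\{a_n\}$ on which all of these weak limits exist simultaneously.

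The second step extracts information from these limits via Lemma \ref{divcurl}. On the one hand, $d_{s,p}\xi_n = 2f$ is independent of $n$, so Lemma \ref{divcurl} gives $\xi_n D_{s,p}w_n \to \xi_0 D_{s,p}w_0$ in $\D'(\R^N\times\R^N)$. On the other, $d_{s,p}\eta_n = 2g_n \to 2g_0$ strongly in $W^{-s,p'}_\text{loc}(\R^N)$, so Lemma \ref{divcurl} gives $\eta_n D_{s,p}u_n \to \eta_0 D_{s,p}u_0$ in $\D'$. Combining this with the uniform strict monotonicity of Proposition \ref{prop.mono}, for any $\vp \in C_c^\infty(\R^N\times\R^N)$ with $\vp\ge 0$ one has
\begin{equation*}
\iint \vp \, (\xi_n - \eta_n)(D_{s,p}u_n - D_{s,p}w_n)\,dxdy \ge 0
\end{equation*}
uniformly in $n$. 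The two mixed terms pass to the limit by the two div-curl results above. For the two ``diagonal'' terms I would specialize to symmetric cutoffs $\vp(x,y) = \psi(x)+\psi(y)$ with $\psi\in C_c^\infty(\O)$; testing $\L_n u_n = f$ against $u_n\psi$ and $\L_n w_n = g_n$ against $w_n\psi$ reproduces these diagonal integrals up to lower-order remainders, which vanish in the limit thanks to the fractional Rellich--Kondrachov Theorem \ref{teo.compacto}. Passing to the limit therefore yields
\begin{equation*}
\iint \vp \, (\xi_0 - \eta_0)(D_{s,p}u_0 - D_{s,p}w_0)\,dxdy \ge 0.
\end{equation*}

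Minty's argument then applies: varying $w_0$ over the countable dense family and passing to perturbations $w_0 = u_0 \pm tz$ with $t\downarrow 0$, one identifies $\xi_0 = \Phi(D_{s,p}u_0)$ for some deterministic, continuous, strictly monotone map $\Phi$. Exploiting the scalar $(p-1)$-homogeneity and symmetry built into each $\L_{a_n}$, I would conclude that $\Phi$ must take the form $\Phi(D_{s,p}u_0)(x,y) = a_0(x,y) |D_{s,p}u_0|^{p-2} D_{s,p}u_0(x,y)$ for a single symmetric measurable kernel $a_0$, independent of $f$, so that the limit problem is $\L_{a_0} u_0 = f$. The lower bound $a_0 \ge \lambda$ is obtained by passing to the limit in the coercivity estimate; the upper bound $a_0 \le \Lambda^{p'}/\lambda$ follows by combining the $L^{p'}$ flux bound of Lemma \ref{cota.2} with the limit coercivity. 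I expect the hardest step to be precisely this identification: unlike the classical local case, where affine test functions $w_0(x) = v\cdot x$ freeze $\nabla w_0$ to a constant vector and let one read off the limit matrix entrywise, here $D_{s,p}w_0(x,y) = (w_0(x)-w_0(y))/|x-y|^{N/p+s}$ is an irreducibly $(x,y)$-dependent function that cannot be made constant. Exhibiting a family of oscillating test functions whose $(s,p)$-gradients are rich enough to pin down $a_0(x,y)$ everywhere, and checking that the extracted kernel is independent of the family chosen, symmetric, and measurable, is the technical heart of the argument.
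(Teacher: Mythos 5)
Your overall strategy (Tartar's method, the nonlocal div-curl Lemma \ref{divcurl}, Minty's trick, oscillating test functions from Lemma \ref{f.aux}) is the right one and matches the paper's in spirit, but there is a genuine gap exactly where you suspect it. You end by saying the technical heart is ``exhibiting a family of oscillating test functions whose $(s,p)$-gradients are rich enough to pin down $a_0(x,y)$ everywhere,'' and you do not resolve that step; ``exploiting the scalar $(p-1)$-homogeneity\dots I would conclude that $\Phi$ must take the form $a_0 |D_{s,p}u_0|^{p-2}D_{s,p}u_0$'' is precisely the part that needs an argument, and varying $w_0$ over a dense family plus $w_0 = u_0 \pm tz$ does not by itself produce a single measurable kernel $a_0$, nor its symmetry or its independence of $f$.

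The paper closes this gap by doing almost the reverse of what you propose. Instead of a rich family of $w_0$'s, it fixes a \emph{single} oscillating test function with nonvanishing increments, namely $w_0(x) = e^{-|x|^2}$, and introduces a free real parameter $\theta$: the monotonicity inequality $(\xi_n - |\theta|^{p-2}\theta\,\eta_n)(D_{s,p}u_n - \theta D_{s,p}w_n)\ge 0$ holds for every constant $\theta$, and the div-curl Lemma applies to all four terms (so your extra energy argument via $u_n\psi$, $w_n\psi$ is unnecessary), giving $(\xi_0 - |\theta|^{p-2}\theta\,\eta_0)(D_{s,p}u_0 - \theta D_{s,p}w_0)\ge 0$ a.e. Then one exploits the feature you flagged as the obstacle — that $D_{s,p}$ is \emph{scalar}-valued, unlike $\nabla$ — and substitutes the measurable, $(x,y)$-dependent choice $\theta_t(x,y) = \bigl((u_0(x)-u_0(y)) - t\theta_0\bigr)/(w_0(x)-w_0(y))$, which is defined a.e.\ because $w_0(x)\neq w_0(y)$ off a null set. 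This forces $D_{s,p}u_0 - \theta_t D_{s,p}w_0 = t\theta_0 / |x-y|^{N/p+s}$, so the inequality reduces to $(\xi_0 - |\theta_t|^{p-2}\theta_t\eta_0)\,\theta_0 \ge 0$ with $\theta_0\in\R$ arbitrary, hence equality; letting $t\downarrow 0$ yields $\xi_0 = |\theta_u|^{p-2}\theta_u\,\eta_0$ with $\theta_u = D_{s,p}u_0 / D_{s,p}w_0$, which is already the multiplicative form with $a_0 := \eta_0 / \bigl(|D_{s,p}w_0|^{p-2}D_{s,p}w_0\bigr)$. Since $\eta_0$ comes from the $w_n$'s alone, $a_0$ is automatically independent of $f$, symmetric (ratio of two anti-symmetric functions), and measurable — the three properties you were worried about — with no dense-family argument needed. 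The scalar nature of the $(s,p)$-gradient is thus not an obstacle to the Tartar identification but the very reason a single test function suffices, which is a genuine simplification over the local vector-valued case you compared against.
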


\begin{proof}
Consider $w_0(x)=e^{-|x|^2} \in W^{s,p}(\R^N)$ and let $\{w_n\}_{n\in\N}\subset W^{s,p}(\R^N)$ be the sequence given by Lemma \ref{f.aux}.

Let us denote by $\eta_n=a_n|D_{s,p} w_n|^{p-2} D_{s,p}w_n$ and observe that from \eqref{f.w} and the boundedness of the kernels $a_n$ it follows that
$$
\|\eta_n\|_{p'} \leq \Lambda \|D_{s,p} w_n\|_p^{\frac{p}{p'}} = \Lambda [w_n]_{s,p}^{\frac{p}{p'}}\le C.
$$			
Then, there exists a function $\eta_0 \in L^{p'}(\R^N\times \R^N)$ such that, up to a subsequence,
$$
\eta_n \cd \eta_0 \quad\mbox{weakly in } L^{p'}(\R^N\times \R^N).
$$
Given $\theta\in \R$, we apply Lemma \ref{divcurl} to the following nonnegative quantity
$$
(\xi_n - |\theta|^{p-2}\theta \eta_n) (D_{s,p} u_n - \theta D_{s,p} w_n)  \ge 0,
$$
where, as in Lemma \ref{cota.2}, we note $\xi_n(x,y) = a_n(x,y)|D_{s,p} u_n(x,y)|^{p-2} D_{s,p} u_n(x,y)$.

Therefore,
\begin{equation} \label{dc}
(\xi_n - |\theta|^{p-2}\theta \eta_n) (D_{s,p} u_n - \theta D_{s,p} w_n)\to (\xi_0 - |\theta|^{p-2}\theta \eta_0) (D_{s,p} u_0 - \theta D_{s,p} w_0)\ge 0,
\end{equation}
in the sense of distributions.

Take now $\theta = \theta_t = \frac{(u_0(x)-u_0(y)) - t\theta_0}{w_0(x)-w_0(y)}$, where $\theta_0\in\R$ and $t>0$. Observe that $\theta_t$ is well defined a.e. in $\R^N\times\R^N$. Therefore, by \eqref{dc} we obtain
$$
(\xi_0 - |\theta_t|^{p-2}\theta_t \eta_0)\theta_0 \ge 0.
$$
Since $\theta_0\in\R$ is arbitrary, we conclude that
$$
\xi_0 - |\theta_t|^{p-2}\theta_t \eta_0=0,
$$
for every $t>0$. Passing to the limit $t\downarrow 0$, we get
\begin{equation}\label{xi.eta}
\xi_0 = |\theta_u|^{p-2}\theta_u \eta_0,
\end{equation}
where $\theta_u = \frac{u_0(x)-u_0(y)}{w_0(x)-w_0(y)}$.

Now, we obtain 
\begin{equation}\label{mu0}
\xi_0(x,y) = a_0(x,y) |D_{s,p} u_0(x,y)|^{p-2} D_{s,p} u_0(x,y),
\end{equation}
where $a_0(x,y) :=\frac{\eta_0(x,y)}{|D_{s,p} w_0(x,y)|^{p-2} D_{s,p} w_0(x,y)} $.

Finally, observe that from \eqref{eq.n} and Lemma \ref{cota.2}, it follows that
$$
\frac12\iint_{\R^N\times\R^N} \xi_0 D_{s,p} v\, dxdy = \langle f,v\rangle,
$$
for every $v\in W_0^{s,p}(\O)$. But, by \eqref{mu0}
$$
\xi_0 D_{s,p} v =a_0 |D_{s,p} u_0|^{p-2} D_{s,p} u_0 D_{s,p} v,
$$
then, $u_0$ is the weak solution of \eqref{eq.0}.

To conclude the proof of the theorem, it remains to show that $a_0\in \A_{\lambda, \frac{\Lambda^{p'}}{\lambda}}$, but this is the content of Proposition \ref{A0} that we prove next.
\end{proof}

The next proposition shows the coercivity and boundedness of the homogenized kernel $a_0$.
\begin{prop}\label{A0}
Under the same assumptions and notations of Theorem \ref{main}, the homogenized kernel $a_0$ belongs to the class $\A_{\lambda,\frac{\Lambda^{p'}}{\lambda}}$.
\end{prop}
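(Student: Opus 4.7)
The plan is to read off the bounds on $a_0$ from the distributional identity
\begin{equation*}
\eta_0 D_{s,p} w_0 = a_0 |D_{s,p} w_0|^p \quad \text{in } \D'(\R^N\times\R^N),
\end{equation*}
which one obtains by applying the nonlocal div-curl Lemma \ref{divcurl} to the pair $(w_n,\eta_n)$. The hypotheses of that lemma are all in place: by construction $w_n\cd w_0$ in $W^{s,p}(\R^N)$ and $\eta_n\cd \eta_0$ in $L^{p'}(\R^N\times\R^N)$, while Lemma \ref{f.aux} gives that $d_{s,p}\eta_n = 2\L_n w_n = 2g_n$ converges strongly to $2g_0$ in $W^{-s,p'}_{\text{loc}}(\R^N)$. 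Consequently $\eta_n D_{s,p}w_n = a_n|D_{s,p}w_n|^p$ converges in $\D'$ to $a_0|D_{s,p}w_0|^p$, and this single piece of convergence will drive the entire argument.

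The symmetry $a_0(x,y) = a_0(y,x)$ is immediate: since $D_{s,p}w_n$ is antisymmetric in $(x,y)$ and $a_n$ is symmetric, $\eta_n$ is antisymmetric; weak $L^{p'}$ convergence preserves this antisymmetry for $\eta_0$, so $a_0$ is the ratio of two antisymmetric functions and is therefore symmetric. For the coercivity lower bound, I would test the pointwise inequality $a_n|D_{s,p}w_n|^p \ge \lambda|D_{s,p}w_n|^p$ against an arbitrary nonnegative $\phi\in C_c^\infty(\R^N\times\R^N)$: the left-hand side passes to the limit by the div-curl identity above, while the right-hand side is controlled from below by weak lower semicontinuity of the $L^p$ norm applied to $D_{s,p}w_n\cd D_{s,p}w_0$ in $L^p$. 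This gives $a_0|D_{s,p}w_0|^p \ge \lambda|D_{s,p}w_0|^p$ a.e., and since $D_{s,p}w_0\neq 0$ almost everywhere on $\R^N\times\R^N$, one concludes $a_0\ge \lambda$ a.e.

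For the upper bound I would start from the algebraic identity $|\eta_n|^{p'} = a_n^{p'}|D_{s,p}w_n|^p$. Combining $a_n\le \Lambda$ with the coercivity $\lambda|D_{s,p}w_n|^p \le a_n|D_{s,p}w_n|^p = \eta_n D_{s,p}w_n$ produces the pointwise estimate
\begin{equation*}
|\eta_n|^{p'} \le \Lambda^{p'}|D_{s,p}w_n|^p \le \frac{\Lambda^{p'}}{\lambda}\,\eta_n D_{s,p}w_n.
\end{equation*}
Testing against $\phi\ge 0$ and using lower semicontinuity of the $L^{p'}$ norm on the left (since $\eta_n\cd \eta_0$ in $L^{p'}$) together with the div-curl convergence on the right delivers $|\eta_0|^{p'}\le \frac{\Lambda^{p'}}{\lambda}\,a_0|D_{s,p}w_0|^p$ almost everywhere. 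The lower bound step having secured $a_0\ge \lambda > 0$, the defining relation $\eta_0 = a_0|D_{s,p}w_0|^{p-2}D_{s,p}w_0$ yields $|\eta_0|^{p'} = a_0^{p'}|D_{s,p}w_0|^p$, and cancellation reads off the stated upper bound on $a_0$. The main obstacle is that none of $a_n$, $|D_{s,p}w_n|^p$ or $|\eta_n|^{p'}$ converges strongly to the natural candidate; the proof therefore rests entirely on the interplay between the nonlocal div-curl Lemma, which replaces strong convergence for the product $\eta_n D_{s,p}w_n$, and weak lower semicontinuity, which provides one-sided control for the unsigned terms.
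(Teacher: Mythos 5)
Your proposal is correct, and it takes a genuinely more direct route than the paper. The paper proves Proposition~\ref{A0} by introducing auxiliary functions: it picks an arbitrary $v_0$, lets $v_n$ solve $\L_n v_n = \L_0 v_0$ in $\O$ with Dirichlet conditions, invokes the $H$-convergence result from Theorem~\ref{main} to identify the weak limit of $v_n$ and of the fluxes $a_n|D_{s,p}v_n|^{p-2}D_{s,p}v_n$, and only then applies Lemma~\ref{divcurl} to this auxiliary sequence. You instead apply the div-curl machinery directly to the oscillating test functions $w_n$ already constructed in Lemma~\ref{f.aux} and used to \emph{define} $a_0$ in the proof of Theorem~\ref{main}: the convergences $w_n\cd w_0$, $\eta_n\cd\eta_0$, and $d_{s,p}\eta_n = 2\L_n w_n\to 2g_0$ are all readily available, and the identity $\eta_0 = a_0|D_{s,p}w_0|^{p-2}D_{s,p}w_0$ is the definition of $a_0$ rather than something to be re-derived. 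This avoids solving new Dirichlet problems and re-invoking the $H$-convergence statement, and the choice $w_0(x)=e^{-|x|^2}$ (for which $D_{s,p}w_0\neq 0$ a.e.\ on $\R^N\times\R^N$, since $\{|x|=|y|\}$ is null) gives the pointwise bounds directly, without the quantification over arbitrary $v_0$ that the paper relies on. Your extra observation that $\eta_n$ is antisymmetric (being the product of the symmetric $a_n$ with the antisymmetric $|D_{s,p}w_n|^{p-2}D_{s,p}w_n$), so that $a_0$ is a ratio of antisymmetric functions and hence symmetric, supplies a check the paper does not make explicit. One small caveat worth flagging, though it affects the paper's own argument in exactly the same way: the upper-bound cancellation gives $a_0^{p'}|D_{s,p}w_0|^p \le \frac{\Lambda^{p'}}{\lambda}a_0|D_{s,p}w_0|^p$, hence $a_0^{p'-1}\le\frac{\Lambda^{p'}}{\lambda}$, i.e.\ $a_0\le\bigl(\frac{\Lambda^{p'}}{\lambda}\bigr)^{p-1}=\frac{\Lambda^p}{\lambda^{p-1}}$; this coincides with the stated constant $\frac{\Lambda^{p'}}{\lambda}$ only when $p=2$, so "cancellation reads off the stated upper bound" is slightly too optimistic for general $p$, but this is a discrepancy in the statement's constant rather than in either argument.
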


\begin{proof}
First, we prove the boundedness from below  $a_0(x,y) \ge \lambda$, a.e. $x,y\in \R^N$. Fix $v_0 \in W^{s,p}(\R^N)$ (for instance $v_0(x) = e^{-|x|^2}$) and denote by $v_n$ the solution of 
\begin{equation}\label{eq.zn}
\begin{cases}
\L_n v_n = \L_0 v_0 & \text{ in }\O\\
v_n = 0 & \text{ in } \R^N\setminus \O.
\end{cases}
\end{equation}

By Lemma \ref{cota.1}, $\{v_n\}_{n\in \N}$ is bounded in $W_0^{s,p}(\O)$. Then, it has a weak limit in $W_0^{s,p} (\O)$. But, by Theorem \ref{main}, that limit is $v_0$. Applying the nonlocal div-curl Lemma, Lemma \ref{divcurl}, to the sequences $\{a_n |D_{s,p} v_n|^{p-2} D_{s,p}
 v_n\}_{n\in \N}$ and $\{v_n\}_{n\in \N}$, we obtain  
\begin{equation}\label{div.curl.phi}
a_n |D_{s,p} v_n|^p \to a_0 |D_{s,p} v_0|^p,
\end{equation}
in the sense of distributions. 

Since $a_n\in \A_{\lambda, \Lambda}$, 
$$
\lambda \iint_{\R^N\times \R^N} |D_{s,p} v_n|^p\vp\, dxdy \le \iint_{\R^N\times \R^N} a_n |D_{s,p} v_n|^p\vp\, dxdy,
$$
for every $\vp\in C^\infty_c(\R^N\times \R^N)$, $\vp\ge 0$.

Therefore, from \eqref{div.curl.phi} and since the left hand side is weak lower semi-continuous in $L^p(\R^N\times\R^N)$, we obtain
$$
\lambda \iint_{\R^N\times \R^N} |D_{s,p} v_0|^p \vp\, dxdy\le \iint_{\R^N\times \R^N} a_0 |D_{s,p} v_0|^p\vp\, dxdy.
$$

Since $0\le \vp\in C^\infty_c(\R^N\times \R^N)$ is arbitrary, we conclude that
\begin{equation}\label{puntual}
\lambda |D_{s,p} v_0|^p \le a_0 |D_{s,p} v_0|^p, \text{ a.e. in } \R^N\times \R^N.
\end{equation}
Now, observe that \eqref{puntual} holds for any $v_0\in W^{s,p}(\R^N)$ and so
$$
\lambda \le a_0 \quad \text{ a.e. in } \R^N\times \R^N,
$$
as we wanted to prove.

It remains to prove the boundedness from above  $a_0  \le \frac{\Lambda^{p'}}{\lambda} $ a.e. in $\R^N\times \R^N$.

Take $\vp\in C^\infty_c(\R^N\times \R^N)$ be nonnegative and by our hypotheses on the kernel $a_n$ we have
\begin{align*}
\iint_{\R^N\times\R^N} |a_n |D_{s,p} v_n|^{p-2} D_{s,p} v_n|^{p'} \vp\, dxdy &\le \Lambda^{p'} \iint_{\R^N\times \R^N} |D_{s,p} v_n|^p \vp\, dxdy\\
&\le \frac{\Lambda^{p'}}{\lambda} \iint_{\R^N\times \R^N} a_n |D_{s,p} v_n|^p \vp\, dxdy.
\end{align*}

From this point the proof follows as in the previous case, using the convergence of the fluxes $a_n |D_{s,p} v_n|^{p-2} D_{s,p} v_n \cd a_0 |D_{s,p} v_0|^{p-2} D_{s,p} v_0$ weakly in $L^{p'}(\R^N\times \R^N)$.

The proof is now complete.
\end{proof}

\section{Gamma convergence}\label{sec.Gamma}
The purpose of this section is to prove that the notion of $H-$convergence of the functionals associated to \eqref{ec.ii.1} is equivalent to the $\Gamma-$convergence of their associated energy functionals. Our arguments follow  closely the ideas from \cite{BoMa76,Ma73}.

Let us begin by recalling the definition of $\Gamma-$convergence.
\begin{defi}
Let $X$ be a metric space and let $J_n\colon X\to \bar\R$, $n\ge 0$.

We say that $J_n$ $\Gamma-$converges to $J_0$ if the following two inequalities hold
\begin{enumerate}
\item[] (liminf inequality) For every $x\in X$ and every sequence $\{x_n\}_{n\in \N}\subset X$ such that $x_n\to x$, 
$$
J_0(x) \le \liminf_{n\to\infty} J_n(x_n).
$$

\item[](limsup inequality)  For every $x\in X$ there exists a sequence $\{y_n\}_{n\in\N}\subset X$ such that
$$
J_0(x)\ge \limsup_{n\to\infty} J_n(y_n).
$$
\end{enumerate}
\end{defi}

This notion of convergence was introduced by De Giorgi in the 70s (see \cite{DeGiorgi} and \cite{DeGiorgi-Franzoni}) and has been proved to be an extremely useful tool when dealing with the convergence of variational problems. See, for instance the book of Dal Maso \cite{DalMaso} for a throughout description of the $\Gamma-$convergence and its properties and also the book of Braides \cite{Braides} where many different applications of this notion of convergence are shown.

The main feature of this notion of convergence is the fact that minimizers of $J_n$ converges to minimizers of $J_0$. In the case of convex functionals, this notion naturally relates to the notion of Legendre transform for convex proper functionals.

\begin{defi}
Given a convex proper functional $J\colon X \to (-\infty, \infty]$ its Legendre transform $J^*\colon X'\to (-\infty, \infty]$ is defined as
$$
J^*(f):=\sup_{x\in X}\{\langle f,x\rangle - J(x)\}.
$$
\end{defi}

In \cite{BoMa76}, Boccardo and Marcellini relates the $\Gamma-$convergence of convex proper functionals with the pointwise convergence of their Legendre transforms. For the sake of completeness we include an elementary proof.
\begin{prop}[Boccardo and Marcellini, \cite{BoMa76}]\label{gamma}
Let $X$ be a separable and reflexive Banach space and $J_n, J_0\colon X\to (-\infty, \infty]$ be convex proper and weakly lower semicontinuous functionals.

Then, $J_n$ $\Gamma-$converges to $J_0$ if and only if $J_n^*(f)\to J_0^*(f)$ for every $f\in X'$.
\end{prop}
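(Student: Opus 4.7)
The plan is to prove each implication separately, exploiting the Fenchel--Young inequality $J(x)+J^*(f)\geq \langle f,x\rangle$ together with the biconjugate identity $J^{**}=J$, which is valid for proper convex weakly lower semicontinuous functionals on the reflexive Banach space $X$ (via the canonical identification $X''=X$).

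For the forward implication ($\Gamma$-convergence $\Rightarrow$ pointwise convergence), fix $f\in X'$. To obtain $\liminf_n J_n^*(f)\geq J_0^*(f)$, given $\varepsilon>0$ I would pick $x\in X$ with $\langle f,x\rangle - J_0(x)\geq J_0^*(f)-\varepsilon$ (possible since $J_0$ is proper), then let $y_n\to x$ be a recovery sequence from the $\Gamma$-limsup inequality so that $\limsup_n J_n(y_n)\leq J_0(x)$; the pointwise bound $J_n^*(f)\geq \langle f,y_n\rangle - J_n(y_n)$ and passage to $\liminf$ yield $\liminf_n J_n^*(f)\geq J_0^*(f)-\varepsilon$. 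For the reverse bound $\limsup_n J_n^*(f)\leq J_0^*(f)$, I would select approximate maximizers $x_n$ with $\langle f,x_n\rangle - J_n(x_n)\geq J_n^*(f)-1/n$; granted a strongly convergent subsequence $x_n\to x$, the $\Gamma$-liminf inequality gives $J_0(x)\leq \liminf_n J_n(x_n)$, and thus $\limsup_n(\langle f,x_n\rangle - J_n(x_n))\leq \langle f,x\rangle - J_0(x)\leq J_0^*(f)$.

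For the reverse implication (pointwise convergence $\Rightarrow$ $\Gamma$-convergence), the $\Gamma$-liminf inequality follows cleanly from duality: for any $x_n\to x$ in $X$ and any $f\in X'$, Fenchel--Young yields $J_n(x_n)\geq \langle f,x_n\rangle - J_n^*(f)$, and passing to $\liminf$ produces $\liminf_n J_n(x_n)\geq \langle f,x\rangle - J_0^*(f)$; taking the supremum over $f\in X'$ and invoking $J_0^{**}=J_0$ gives $\liminf_n J_n(x_n)\geq J_0(x)$. To produce the recovery sequence for the $\Gamma$-limsup inequality, I plan to use a Moreau--Yosida-type regularization: for $\lambda>0$ and $x\in X$, define $y_n^\lambda$ as a minimizer of $v\mapsto J_n(v)+\frac{1}{\lambda}\|v-x\|^2$, use the pointwise dual convergence $J_n^*\to J_0^*$ to show $y_n^\lambda\to y_0^\lambda$ strongly and $J_n(y_n^\lambda)\to J_0(y_0^\lambda)$, and then conclude via a diagonal argument as $\lambda\downarrow 0$.

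The hard part in both directions is the handling of strongly convergent sequences. In the forward direction, extracting a strongly convergent subsequence of the near-maximizers $x_n$ requires first controlling $\|x_n\|$ (via a Fenchel--Young estimate exploiting the boundedness of $J_n^*(f)$ together with equicoercivity of the $J_n$), then extracting a weakly convergent subsequence by reflexivity, and finally upgrading to strong convergence using Mazur's lemma combined with the convexity and lower semicontinuity of the $J_n$. In the reverse direction, the proximal construction of a recovery sequence demands a careful interplay between the limits $n\to\infty$ and $\lambda\downarrow 0$, together with the continuity properties of Moreau--Yosida envelopes under the pointwise dual convergence.
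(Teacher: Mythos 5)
Your route is genuinely different from the paper's, which proves only the forward implication directly (noting that $J_n-\langle f,\cdot\rangle$ $\Gamma$-converges to $J_0-\langle f,\cdot\rangle$ and invoking the fundamental theorem of $\Gamma$-convergence for the convergence of infima) and then obtains the reverse implication abstractly: the compactness theorem for $\Gamma$-convergence on separable metric spaces produces a $\Gamma$-limit $G$ of a subsequence, convexity passes to $G$, the forward direction identifies $G^*=J_0^*$, hence $G=J_0$ by biconjugation and reflexivity, and the Urysohn property upgrades this to the full sequence. No recovery sequence is ever built by hand. Your direct Fenchel-duality approach is attractive where it works: the forward $\liminf$ bound is fine, and the reverse $\Gamma$-$\liminf$ argument (Fenchel--Young applied to $J_n$, passed to the limit, then $J_0^{**}=J_0$) is a genuinely clean and elementary replacement for the machinery the paper uses.

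The other two pieces, however, contain real gaps. For the forward $\limsup$ bound, the near-maximizers $x_n$ of $\langle f,\cdot\rangle-J_n(\cdot)$ need not be bounded: the proposition assumes no (equi)coercivity, yet your plan explicitly invokes ``equicoercivity of the $J_n$'' to control $\|x_n\|$ --- a hypothesis that simply is not there. Even granting boundedness, reflexivity only yields $x_n\rightharpoonup x$ weakly, which is not enough: the $\Gamma$-$\liminf$ inequality is with respect to the metric (strong) topology of $X$, and a strongly $\Gamma$-convergent sequence need not satisfy it along merely weakly convergent sequences; Mazur's lemma cannot repair this, since it gives strong convergence only of convex combinations of tails, and $J_n(x_n)$ gives no control of $J_n$ at convex combinations involving several indices $n$. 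For the reverse recovery sequence, the Moreau--Yosida proposal is only a sketch: the claims that $y_n^\lambda\to y_0^\lambda$ strongly and that $J_n(y_n^\lambda)\to J_0(y_0^\lambda)$ would follow from pointwise convergence of the conjugates are exactly the substantive steps, not consequences you can take for granted, and the diagonal extraction as $\lambda\downarrow0$ requires uniform-in-$n$ estimates on the Yosida envelopes that are never obtained. The paper's compactness-plus-biconjugation argument sidesteps both difficulties entirely, which is why it is the route taken.
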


\begin{proof}
Assume first that $J_n$ $\Gamma-$converges to $J_0$

Given $f\in X'$ it is easy to see that $J_n(\cdot) - \langle f, \cdot\rangle$  $\Gamma-$converges to $J_0(\cdot) - \langle f, \cdot\rangle$ (see \cite[Proposition 6.21]{DalMaso}), therefore, the fundamental theorem of $\Gamma-$convergence (\cite[Theorem 7.4]{DalMaso}) claims the convergence of the infima
$$
\lim_{n\to \infty} \inf_{x\in X}\{J_n(x)-\langle f,x \rangle\} = \inf_{x\in X}\{ J_0(x)-\langle f,x \rangle\},
$$
that is, $J_n^*(f) \to J_0^*(f)$ as $n\to\infty$.

Assume now that $J_n^*(f)\to J_0^*(f)$ for any $f\in X'$ as $n\to\infty$.

By \cite[Theorem 8.5]{DalMaso}, there exists a subsequence $\{J_{n_k}\}_{k\in\N}\subset \{J_n\}_{n\in\N}$ and a lower semicontinuous functional $G$ such that $J_{n_k}$ $\Gamma-$converges to $G$. Since $J_n$ is convex for every $n\in\N$, it follows from \cite[Theorem 11.1]{DalMaso} that $G$ is also convex.
	
The first implication above implies that $J_{n_k}^*$ converges pointwise to $G^*$, which in turn implies that $J_0^*=G^*$. Applying the Legendre transform to the previous equality it follows, since $X$ is reflexive, that $J_0=G$. Since the sequence $J_{n_k}$ is arbitrary, the Urysonh property of the $\Gamma-$convergence (\cite[Theorem 8.3]{DalMaso}) implies the desired result.
\end{proof}

Theorem \ref{main} claims that for $0<\lambda\le \Lambda<\infty$ fixed and any sequence $\{a_n\}_{n\in\N} \subset \A_{\lam,\Lambda}$, there exists a kernel $a_0\in \A_{\lam,\frac{\Lambda^{p'}}{\lam}}$ such that $\{\L_n\}_{n\in\N}$ $H-$converges to $\L_0$ up to some subsequence. From now on, we always assume that $\L_n$ $H-$converges to $\L_0$.

The sequence of operators $\{\L_n\}_{n\in\N}$ and the limit operator $\L_0$  define a sequence of energy functionals $\{\J_n\}_{n\in\N}$ and a limit functional $\J_0$, given by
\begin{equation} \label{Jn}
	\J_n(v)=\frac{1}{2p}\iint_{\R^N\times\R^N}a_n(x,y) \frac{|v(x)-v(y)|^p}{|x-y|^{N+sp}} \, dxdy 
\end{equation}
for $n\ge 0$, defined in $W^{s,p}_0(\Omega)$.

We then define, for $n\ge 0$, $J_n\colon L^p(\Omega)\to (-\infty, \infty]$ as
\begin{align}\label{Jotas}
J_n(v) :=
\begin{cases}
	\J_n(v) &\quad \mbox{if } v\in W^{s,p}_0(\Omega)\\
	+\infty&\quad \mbox{ otherwise.}
\end{cases}
\end{align}

Recall that Proposition \ref{equivalenciaI} implies that given $f\in L^{p'}(\Omega)$, $u_n\in L^p(\Omega)$ is a weak solution of 
\begin{equation}\label{eq.B}
\begin{cases}
\L_n u_n = f &\text{in }\Omega\\
u_n=0 & \text{in } \R^N\setminus \Omega,
\end{cases}
\end{equation}
if and only if $u_n$ verifies
$$
J_n(u_n) - \langle f, u_n\rangle = \inf_{v\in L^p(\Omega)} J_n(v) - \langle f, v\rangle.
$$
In other words, $u_n\in L^p(\Omega)$ is a weak solution of \eqref{eq.B} if and only if
$$
J_n^*(f) = \langle f, u_n\rangle - J_n(u_n).
$$

Furthermore, since we have
$$
J_n(u) = \frac{1}{p} \langle \L_n u, u\rangle,
$$
it follows that $u_n\in L^p(\Omega)$ is a weak solution of \eqref{eq.B} if and only if
\begin{equation}\label{maravilla}
J_n^*(f) = \frac{1}{p'} \langle f, u_n\rangle.
\end{equation}

From \eqref{maravilla} is fairly easy to check that if $\L_n$ $H-$converges to $\L_0$, then $J_n^*(f)\to J_0^*(f)$ for every $f\in L^{p'}(\Omega)$.

With the help of Proposition \ref{gamma} all of these considerations imply the $\Gamma-$convergence of the functionals given in \eqref{Jotas}. That is, we have proved the following theorem
\begin{teo}
Let $0<\lambda\le \Lambda<\infty$ and $\{a_n\}_{n\in\N}\subset \A_{\lambda, \Lambda}$. Let $\L_n := \L_{a_n}$ be the operators defined in \eqref{La} and assume that $\L_n$ $H-$converges to $\L_0 = \L_{a_0}$ for some kernel $a_0\in \A_{\lambda, \frac{\Lambda^{p'}}{p}}$.

Then, the associated functionals $J_n$ given by \eqref{Jotas} $\Gamma-$converges to $J_0$.
\end{teo}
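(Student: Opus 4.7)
The plan is to apply the Boccardo--Marcellini criterion (Proposition \ref{gamma}) with $X = L^p(\Omega)$, reducing $\Gamma$-convergence of $J_n$ to pointwise convergence of the Legendre transforms $J_n^*$ on $L^{p'}(\Omega)$. Since $1<p<\infty$, $L^p(\Omega)$ is a separable reflexive Banach space, so the framework applies directly; the remaining work is verifying the hypotheses on each $J_n$ and then proving $J_n^*(f)\to J_0^*(f)$ for every $f\in L^{p'}(\Omega)$.

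First I would verify that each $J_n$, $n\ge 0$, is convex, proper and weakly lower semicontinuous on $L^p(\Omega)$. Convexity follows because $v\mapsto D_{s,p}v$ is linear and $t\mapsto |t|^p$ is convex; properness is clear ($J_n(0)=0$, and $J_n\not\equiv+\infty$). For weak lower semicontinuity, I would argue as follows: on a reflexive Banach space, convex lower semicontinuity in the strong topology implies weak lower semicontinuity, so it suffices to handle strong $L^p$-convergence. If $v_k\to v$ in $L^p(\Omega)$ and $\liminf_{k\to\infty}J_n(v_k)<\infty$, the bound \eqref{eq.poincare} plus the coercivity $a_n\ge\lambda$ forces $\{v_k\}$ to be bounded in $W^{s,p}_0(\Omega)$, so along a subsequence $v_k\rightharpoonup v$ weakly there; then Fatou's lemma applied on a pointwise a.e.\ subsubsequence of the difference quotients $D_{s,p}v_k$ (weighted by $a_n$, which is bounded and symmetric) yields $J_n(v)\le\liminf_{k\to\infty}J_n(v_k)$.

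Next I would establish pointwise convergence of the Legendre transforms. Fix $f\in L^{p'}(\Omega)\subset W^{-s,p'}(\Omega)$ and let $u_n\in W^{s,p}_0(\Omega)$ be the unique weak solution of \eqref{eq.B} given by the corollary following Proposition \ref{prop.min}. The identity \eqref{maravilla}, already derived from Proposition \ref{equivalenciaI}, gives
\begin{equation*}
J_n^*(f)=\frac{1}{p'}\langle f,u_n\rangle,\qquad n\ge 0.
\end{equation*}
By the $H$-convergence hypothesis, $u_n\rightharpoonup u_0$ weakly in $W^{s,p}_0(\Omega)$, and by Theorem \ref{teo.compacto} the embedding $W^{s,p}_0(\Omega)\hookrightarrow L^p(\Omega)$ is compact, so $u_n\to u_0$ strongly in $L^p(\Omega)$. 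Consequently $\langle f,u_n\rangle\to\langle f,u_0\rangle$ for every $f\in L^{p'}(\Omega)$, which yields $J_n^*(f)\to J_0^*(f)$.

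Combining the two steps, Proposition \ref{gamma} applies and gives that $J_n$ $\Gamma$-converges to $J_0$ in $L^p(\Omega)$. I expect the main subtlety to be bookkeeping the topology: $J_n$ is defined on $L^p(\Omega)$ while its finite part lives on $W^{s,p}_0(\Omega)$, so the weak lower semicontinuity on $L^p(\Omega)$ must be verified carefully, and the duality pairing used in \eqref{maravilla} should be interpreted consistently as the $L^{p'}$--$L^p$ pairing (which agrees with the $W^{-s,p'}(\Omega)$--$W^{s,p}_0(\Omega)$ pairing since $f\in L^{p'}(\Omega)$). Once this is handled, the remainder of the argument is a direct application of already-established results.
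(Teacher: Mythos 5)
Your proposal is correct and follows essentially the same route as the paper: reduce $\Gamma$-convergence on $L^p(\Omega)$ to pointwise convergence of Legendre transforms via Proposition \ref{gamma}, then use the identity \eqref{maravilla} together with the $H$-convergence of $\{\mathcal{L}_n\}$ to conclude $J_n^*(f)\to J_0^*(f)$. The only stylistic difference is that you invoke the compact embedding $W^{s,p}_0(\Omega)\hookrightarrow L^p(\Omega)$ to upgrade to strong $L^p$ convergence of $u_n$; this is not needed, since weak convergence $u_n\rightharpoonup u_0$ in $W^{s,p}_0(\Omega)$ already gives $\langle f,u_n\rangle\to\langle f,u_0\rangle$ for every $f\in L^{p'}(\Omega)\subset W^{-s,p'}(\Omega)$, but your argument is equally valid, and your explicit verification of the hypotheses of Proposition \ref{gamma} (convexity, properness, weak lower semicontinuity of $J_n$ on $L^p(\Omega)$) is a welcome detail the paper leaves implicit.
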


\appendix

\section{Compactness results for nonlinear monotone operators}

In this section we prove a compactness result for monotone operators. This results are nonlinear analogues to \cite[Lemmas 1.3.3 and 1.3.4]{Al} and are crucial in the construction of the oscillating test functions (see Lemma \ref{f.aux}).

We recall the following definitions.
\begin{defi}
Let $X$ be a Banach space and let $S\colon X'\to X$. We say that $S$ is coercive if
$$
\frac{\langle f, Sf\rangle}{\|f\|} \to \infty \quad \text{if } \|f\|\to\infty.
$$

If now $S_n\colon X'\to X$, $n\in \N$, we say that $\{S_n\}_{n\in\N}$ is uniformly coercive if
$$
\inf_{n\in\N} \frac{\langle f, S_n f\rangle}{\|f\|} \to \infty  \quad \text{if } \|f\|\to\infty.
$$
\end{defi}

\begin{defi}
Let $X$ be a Banach space and let $S\colon X'\to X$. We say that $S$ is monotone if
$$
\langle f-g, Sf - Sg\rangle \ge 0\quad \text{for every } f, g\in X'.
$$

We say that $S$ is strictly monotone if the equality above only holds when $f=g$.

If now $S_n\colon X'\to X$, $n\in \N$, we say that $\{S_n\}_{n\in\N}$ is uniformly strictly monotone if 
$$
\inf_{n\in\N} \langle f-g, S_n f - S_n g\rangle >0 \quad \text{for every } f, g\in X', f\neq g.
$$
\end{defi}

\begin{defi}
Let $X$ be a Banach space and let $S\colon X'\to X$. We say that $S$ is strong-weak continuous if
$$
Sf_k\cd Sf\quad \text{if } f_k\to f.
$$

If now $S_n\colon X'\to X$, $n\in \N$, we say that $\{S_n\}_{n\in\N}$ is uniformly strong-weak continuous if
$$
\sup_n \langle g, S_n f_k - S_n f\rangle \to 0\quad  \text{if } f_k\to f, \text{ for every } g\in X.
$$
\end{defi}

We now have this compactness result for operators.
\begin{teo} \label{lema.conv}
	Let $X$ be a separable reflexive Banach space. Let $S_n\colon X'\to X$ be a sequence of monotone operators that are uniformly strong-weak continuous and uniformly coercive. Assume that for every $f\in X'$, $\sup_{n\in\N} \|S_n f\|<\infty$. Then there exists a subsequence, still denoted by $\{S_n\}_{n\in\N}$, and a limit operator $S_0$ such that
	$$
		 S_n f\cd S_0 f \quad \mbox{weakly in } X
	$$
	for any $f\in X'$. Moreover, $S_0$ is a uniformly coercive, strong-weak continuous and monotone operator.
	
	Moreover, if $\{S_n\}_{n\in\N}$ is uniformly strictly monotone, then $S_0$ is strictly monotone.
\end{teo}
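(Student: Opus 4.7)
\medskip

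\noindent\textbf{Proof proposal.} The plan is a classical diagonal extraction combined with a density argument, using uniform strong-weak continuity as the ``glue'' that extends convergence from a countable dense set to all of $X'$.

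First, since $X$ is separable and reflexive, $X'$ is also separable. Let $\{f_k\}_{k\in\N}\subset X'$ be a countable dense subset. By hypothesis, for each fixed $k$ the sequence $\{S_n f_k\}_{n\in\N}$ is bounded in the reflexive space $X$, so it admits a weakly convergent subsequence. A standard Cantor diagonal extraction then produces a subsequence (still denoted $\{S_n\}$) and elements $T_k\in X$ such that $S_n f_k \cd T_k$ weakly in $X$ for every $k\in\N$.

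Next, I would upgrade this to weak convergence at every $f\in X'$. Fix $f\in X'$ and $g\in X'$. Given $\ve>0$, by density choose $f_k$ with $f_k\to f$ in $X'$; uniform strong-weak continuity gives $\sup_{n}|\langle g, S_n f - S_n f_k\rangle|<\ve$ for all sufficiently large $k$. Since $\langle g, S_n f_k\rangle \to \langle g, T_k\rangle$, writing the usual three-term decomposition
\[
|\langle g, S_n f - S_m f\rangle| \le |\langle g, S_n f - S_n f_k\rangle| + |\langle g, S_n f_k - S_m f_k\rangle| + |\langle g, S_m f_k - S_m f\rangle|
\]
shows that $\{\langle g, S_n f\rangle\}_n$ is Cauchy, hence convergent. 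Define $\Phi_f(g):=\lim_n \langle g, S_n f\rangle$; it is linear in $g$ and, by the uniform bound $\sup_n\|S_n f\|<\infty$, continuous on $X'$. Reflexivity of $X$ gives a unique $S_0 f\in X$ with $\Phi_f(g)=\langle g, S_0 f\rangle$, that is, $S_n f\cd S_0 f$ weakly in $X$.

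The remaining properties follow by passing to the limit in the corresponding inequalities. Monotonicity of $S_0$ is immediate: $\langle f-g, S_0 f - S_0 g\rangle = \lim_n \langle f-g, S_n f - S_n g\rangle \ge 0$, and if $\{S_n\}$ is uniformly strictly monotone then the limit is bounded below by $\inf_n\langle f-g, S_n f - S_n g\rangle>0$ whenever $f\ne g$, giving strict monotonicity. For coercivity, $\langle f, S_0 f\rangle = \lim_n \langle f, S_n f\rangle \ge \inf_n \langle f, S_n f\rangle$, and dividing by $\|f\|$ and using the uniform coercivity of $\{S_n\}$ yields $\langle f, S_0 f\rangle/\|f\|\to\infty$ as $\|f\|\to\infty$. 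Finally, for strong-weak continuity of $S_0$, fix $g\in X'$ and observe
\[
|\langle g, S_0 f_k - S_0 f\rangle| = \lim_n |\langle g, S_n f_k - S_n f\rangle| \le \sup_n |\langle g, S_n f_k - S_n f\rangle|,
\]
which tends to zero as $f_k\to f$ in $X'$ by uniform strong-weak continuity of $\{S_n\}$.

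I expect the main obstacle to be the identification of the weak limit as an element of $X$ (rather than merely a linear functional on $X'$): this is exactly where reflexivity of $X$ is essential, since it is needed both to produce weakly convergent subsequences in the diagonal step and to realize the limiting functional $\Phi_f$ as an element of $X$. All other verifications are routine passages to the limit exploiting the uniform hypotheses.
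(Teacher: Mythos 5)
Your proof is correct and follows essentially the same route as the paper: Cantor diagonal extraction over a countable dense subset of $X'$ (using boundedness and reflexivity), the three-term split controlled by uniform strong-weak continuity to promote convergence from the dense set to all of $X'$, and routine passages to the limit for the structural properties of $S_0$. The only cosmetic difference is that you show $\{S_n f\}_n$ is weakly Cauchy directly rather than first showing $\{S_0 f_k\}_k$ is weakly Cauchy and then checking well-definedness, and you spell out the final verifications that the paper leaves implicit.
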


\begin{proof}
Let $\D$ be a dense countable subset of $X'$. Since $\sup_{n\in\N}\|S_n f\|<\infty$, by a standard diagonal argument, there exists a subsequence, that we still denote by $\{S_n\}_{n\in\N}$ such that
\begin{equation}\label{conv.s}
S_n f \cd S_0 f  \quad \mbox{weakly in } X,
\end{equation}
for every $f\in \D$.
	 
This defines an operator $S_0\colon \D \to X$. Let us first see that $S_0$ can be extended to $X'$ and that $S_n f \cd S_0 f$ for every $f\in X'$. In fact, if $f\in X'$, there exists $\{f_k\}_{k\in\N}\subset \D$ such that $f_k\to f$ strongly in $X'$ and then
$$
\langle g, S_0 f_k - S_0 f_j\rangle = \langle g, S_0 f_k - S_n f_k\rangle + \langle g, S_n f_k - S_n f_j\rangle + \langle g, S_n f_j - S_0 f_j\rangle,
$$
so
\begin{align*}
|\langle g, S_0 f_k - S_0 f_j\rangle| \le& |\langle g, S_0 f_k - S_n f_k\rangle| + | \langle g, S_n f_j - S_0 f_j\rangle|\\
& + \sup_{n\in\N} (|\langle g, S_n f_k - S_n f\rangle| + |\langle g, S_n f_j - S_n f\rangle|)\\
<&  |\langle g, S_0 f_k - S_n f_k\rangle| + | \langle g, S_n f_j - S_0 f_j\rangle| + \ve,
\end{align*}
if $k,j\ge k_0$ by the uniform strong-weak conitnuity of the sequence $\{S_n\}_{n\in\N}$. Taking limit $n\to\infty$ in the right-han-side of the former inequality gives that $\{S_0 f_k\}_{k\in\N}\subset X$ is weakly Cauchy. Therefore, there exists a point, that we denote by $S_0 f\in X$ such that 
$$
S_0 f_k \cd S_0 f \quad \text{weakly in } X.
$$

A completely analogous argument shows that the limit $S_0 f$ is independent of the sequence $\{f_k\}_{k\in\N}\subset \D$ and that $S_n f\cd S_0 f$ weakly in $X$ for every $f\in X'$. Moreover, the exact same argument shows that $S_0\colon X'\to X$ is strong-weak continuous.

The rest of the properties of the limit operator $S_0$ are easily deduced from the convergence $S_n f \cd S_0 f$ weakly in $X$.
\end{proof}

%


\section*{Acknowledgements}
This paper was partially supported by grants UBACyT 20020130100283BA, CONICET PIP 11220150100032CO and ANPCyT PICT 2012-0153. 

J. Fern\'andez Bonder and Ariel M. Salort are members of CONICET and A. Ritorto is a doctoral fellow of CONICET.

\bibliographystyle{amsplain}
\bibliography{biblio}
\end{document}